 \newtheorem{theorem}{Theorem}[section]
 \newtheorem{corollary}[theorem]{Corollary}
 \newtheorem{lemma}[theorem]{Lemma}
 \newtheorem{proposition}[theorem]{Proposition}
 \theoremstyle{definition}
 \newtheorem{definition}[theorem]{Definition}
 \theoremstyle{remark}
 \newtheorem{remark}[theorem]{Remark}
 \numberwithin{equation}{section}
 \newcommand{\Ext}{\operatorname{Ext}}
 \newcommand{\FS}{\operatorname{FS}}
 \newcommand{\mo}{\operatorname{mod}}
 \newcommand{\Hom}{\operatorname{Hom}}
  \newcommand{\Coh}{\operatorname{Coh}}
  \newcommand{\ch}{\operatorname{ch}}
  \newcommand{\Todd}{\operatorname{Todd}}
  \newcommand{\Arg}{\operatorname{Arg}}
  \newcommand{\Db}{{\mathrm D}^{b}}
  \newcommand{\bfH}{\mathbf{H}}
  \newcommand{\bfE}{\mathbf{E}}
  \newcommand{\cA}{{\mathcal A}}
  \newcommand{\cK}{{\mathcal K}}
  \newcommand{\cO}{{\mathcal O}}
  \newcommand{\cP}{{\mathcal P}}
  \newcommand{\cT}{{\mathcal T}}
  \newcommand{\bC}{{\mathbb C}}
  \newcommand{\bP}{{\mathbb P}}
  \newcommand{\bR}{{\mathbb R}}
  \newcommand{\bZ}{{\mathbb Z}}
\begin{document}

  \title[Quintic periods and stability conditions via HMS]{Quintic
  periods and stability conditions via homological mirror symmetry}

  \author{So Okada}

  \address{Oyama National College of Technology, Oyama Tochigi, Japan
  323-0806}

  \email{okada@oyama-ct.ac.jp}

  \thanks{This project was in part supported by JSPS Grant-in-Aid
  \#23740012.}

  \subjclass[2010]{Primary 53D37,14L24,32G20,11F11}

  \keywords{Mirror symmetry, periods, stability conditions, modular
  forms}

  \begin{abstract} 
   For the Fermat Calabi-Yau threefold and the theory of stability
   conditions \cite{Bri07}, there have been two mathematical aims given
   by physical reasoning.  One is that we should define stability
   conditions by central charges of quintic periods
   \cite{Hos04,Kon12,KonSoi13}, which extend the Gamma class \cite{KKP,
   Iri09,Iri11}.  The other is that for well-motivated stability
   conditions on a derived Fukaya-type category, each stable object
   should be a Lagrangian \cite{ThoYau}.

   We answer affirmatively to these aims with the simplest homological
   mirror symmetry (HMS for short) of the Fermat Calabi-Yau threefold
   \cite{Oka09,FutUed} and stability conditions of Bridgeland type,
   which we introduce. With HMS, we naturally obtain stability
   conditions of Bridgeland type by the monodromy around the Gepner
   point.

   As consequences, we obtain bases of quintic periods and the mirror
   map \cite{CdGP} categorically, wall-crossings by quintic periods, and
   a quasimodular form \cite{KanZag} attached to quintic periods by
   motivic Donaldson-Thomas invariants \cite{KonSoi08}.  The
   quasimodular form is of the quantum dilogarithm and of a mock modular
   form \cite{Zag06}.
  \end{abstract}
  \maketitle

  \section{Introduction}
  In this article, the Fermat Calabi-Yau threefold defined by
  $0=x_{1}^5+x_{2}^{5}+\cdots +x_{5}^{5}$ in $\bP^4$ over the complex
  number $\bC$ is said to be the quintic and denoted by $X$. The quintic
  is the central manifold in the seminal paper of the mirror symmetry
  \cite{CdGP}. Let $G\cong \bZ_{5}^{5}/\bZ_{5}$ act on coordinates of
  $\bP^{4}$ as multiplications by $\xi=\exp(\frac{2\pi i}{5})$.

     For the following {\it Picard-Fuchs equation}:
       \begin{equation*}\label{eq:PF}
	(x \frac{d}{dx})^4 - 5^5 x(x\frac{d}{dx} +\frac{4}{5})
	 (x\frac{d}{dx} +\frac{3}{5} )(x\frac{d}{dx} +\frac{2}{5})(x
	 \frac{d}{dx} +\frac{1}{5}) = 0, 
       \end{equation*} 
       we work on its solutions, which we call {\it quintic periods} and
       are not of the quintic but of its mirror family as periods of
       holomorphic 3-forms.  We look at regular singular points of the
       Picard-Fuchs equation called the large complex structure limit
       and the Gepner point (the orbifold point) corresponding to $x=0$
       and $x=\infty$.
       
       In the following, we explain that with an application in the
       theory of modular forms and periods, HMS gives certain
       categorification of quintic periods by stable Lagrangians and
       wall-crossings of stability conditions of Bridgeland type.
       
       For periods of Picard-Fuchs equations and an introduction to the
       mirror symmetry, the reader can consult \cite{Mor} (cf.
       \cite[Section 2]{KonZag}).

       \subsection{Backgrounds}\label{sec:motivations}

       HMS was introduced by Kontsevich \cite{Kon95} to give a
       categorical understanding of the mirror symmetry. HMS asserts
       derived equivalences of Fukaya-type categories and categories of
       coherent sheaves for two models in topological string theory
       \cite{Wit}.  HMS is an expanding subject \cite{BDFKK} and is
       considered as a natural framework to work on for {\it all types
       of varieties} \cite{Orl11}.  However, the original motivation on
       quintic periods themselves has not been fully pursued, partly
       because several numerical predictions of \cite{CdGP,BCOV} have
       been proved \cite{Giv,LLY,Zin} with sophisticated methods on
       equations.

       The notion of stability conditions \cite{Bri07} is categorical.
       It is based on Mumford's stabilities and Douglas'
       $\Pi$-stabilities in topological string theory
       \cite{Dou01,Dou02}.  We have expected that for a derived
       Fukaya-type category and well-motivated stability conditions,
       each object is uniquely decomposed into certain minimal
       Lagrangians as stable objects (cf. \cite[the table in
       p3]{DHKK}). This is due to an original motivation of the notion
       of stability conditions \cite{ThoYau}. This can be readily
       achieved for certain derived Fukaya-Seidel categories
       \cite{Sei00,Sei01,Sei08}, say, for ADE singularities.  However,
       we would like to use central charges of quintic periods, since
       this is based on the mirror symmetry.
       
       As for the application, there have been a number of attempts to
       attach a quasimodular or modular form to quintic periods
       \cite{Mov}, due to its intrinsic difficulty \cite{Zag12}. We
       attach a quasimodular form to quintic periods by stability
       conditions of Bridgeland type and motivic Donaldson-Thomas
       invariants \cite{KonSoi08}, using the monodromy around the Gepner
       point. In mathematics and physics, it is natural to seek a
       modular property of the generating function of geometric
       invariants of semistable objects.

	\subsection{Stability conditions of Bridgeland type}
	We use {\it central charges}, which are linear functions from
	the Grothendieck group of a triangulated category to $\bC$.  In
	Section \ref{def:stab}, we slightly relax the notion of
	stability conditions for central charges of quintic periods,
 	which we recall in Equation \ref{central}, on the heart of
	bounded a $t$-structure and introduce the notion of stability 
	conditions of Bridgeland type.

	Let us recall that a stability condition of \cite{Bri07} refines
	a heart of a bounded $t$-structure, since, up to isomorphisms,
	each non-zero object of the heart is uniquely decomposed into
	semistable objects, indexed by real numbers called {\it phases}.
	We have Jordan-H\"older decompositions of semistable objects of
	a phase by stable objects of the phase, by assuming the
	local-finiteness in {\it loc cite}. The local-finiteness easily
	holds for stability conditions of Bridgeland type discussed in
	this article.
	
	In the following, for simplicity, we call hearts of bounded
	$t$-structures as hearts and stability conditions of Bridgeland
	type, which also refine hearts, as stability conditions. To
	specify stability conditions of {\it loc cite}, we call them
	Bridgeland stability conditions.

	\subsection{Our claims:}\label{sec:claims}
	\begin{itemize}
	 \item By stability conditions of central charges of quintic
	       periods in Theorems \ref{thm:sheaf} and \ref{thm:lag},
	       HMS gives a categorical understanding of the mirror symmetry; 
	 \item To prove Theorems \ref{thm:sheaf} and \ref{thm:lag}, we
	       simply compute quintic periods asymptotically by the
	       monodromy around the Gepner point.  This is distinct from
	       previous proofs of numerical predictions of the mirror
	       symmetry;
	 \item For the original motivation of HMS, we prove any of our
	       statements without the mirror symmetry in the sense of
	       correspondences between K\"ahler and complex moduli
	       spaces.
	\end{itemize}

	For the quasimodular form in Theorem \ref{thm:quasimodular}, we
	put discussions in Sections \ref{sec:quasi_intro} and
	\ref{sec:broken_symmetry}.  Before explaining other consequences
	of Theorems \ref{thm:sheaf} and \ref{thm:lag}, let us briefly
	recall the mirror symmetry as follows.
	
	In the mirror symmetry \cite{CdGP,Giv,LLY}, we work to explain
	algebro-geometric properties of the quintic by quintic periods.
	More precisely, the most well-known equation in the mirror
	symmetry \cite{CdGP}, which predicts to give numbers of rational
	curves on a family of the quintic by quintic periods near the
	large complex structure limit, turns out wrong even if Clemens
	conjecture is true as observed by Pandharipande \cite{CoxKat}.
	However, by Gromov-Witten invariants (``virtual'' numbers of
	rational curves), for which we have an axiomatic formulation
	\cite{KonMan}, the equation has been justified \cite{Giv,LLY} by
	sophisticated methods. The famous generalization of the
	numerical prediction has been obtained in \cite{BCOV,Zin}.
	
	We have that quintic periods indeed explain non-trivial
	algebro-geometric properties of the quintic, as we have
	wall-crossings of stability conditions of the quintic given by
	quintic periods.  In Corollaries \ref{cor:analytic} and
	\ref{cor:analytic_deform}, an analytic continuation of quintic
	periods and quotients of quintic periods give {\it
	wall-crossings of second kind} \cite{KonSoi08} as tiltings
	\cite{HRS} of the heart given by the Koszul Ext algebra of
	algebro-geometric stable objects of the quintic.

	Above corollaries can be seen as resulting from certain
	categorification of quintic periods. In particular, in Corollary
	\ref{cor:PF}, we have bases of quintic periods by central
	charges of algebro-geometric stable objects, which are
	isomorphic to Lagrangian vanishing cycles.  Let us recall that
	for numerical predictions of \cite{CdGP,BCOV,Giv,LLY,Zin}, the
	mirror map, which is a quotient of quintic periods as in
	Equation \ref{eq:mirror_map}, is of the utmost importance for
	investigating the mirror symmetry.  By perturbing stability
	conditions of Theorem \ref{thm:sheaf} into ones of Lemma
	\ref{lem:sheaf_modified}, we obtain the mirror map in Corollary
	\ref{cor:mirror_map}.  We put Remark \ref{rmk:numerical} on
	numerical predictions of {\it loc cite}.
	
    	\subsection{The role of HMS in this article}
  	What HMS in the simplest form gives are algebro-geometric
  	objects or Lagrangians, by which we construct a derived
  	equivalence via the heart of the extension-closed full
  	subcategory of the objects.  On such a heart, we prove that
  	central charges of quintic periods near the large complex
  	structure limit and the Gepner point give stability conditions
  	in Theorems \ref{thm:sheaf} and \ref{thm:lag} as per the aims
	in the abstract.

	  \subsection{Theorems and corollaries}
	  Let $F$ be the function $x_{1}^{5}+\cdots+x_{5}^{5}:\bC^{5}\to
	  \bC$ and $\FS(F)$ be the derived Fukaya-Seidel category of $F$
	  defined by Lagrangian vanishing cycles in the zero locus of a
	  morsification of $F$.

	  Let us recall the following famous hypergeometric series:
	  \begin{equation*}\label{eq:hyper}
	   \omega(x,p)=\sum_{n\geq
	    0}\frac{\Gamma(1+5(n+p))}{\Gamma(1+(n+p))^{5}}x^{n+p}.
	  \end{equation*}
	  For each object $E\in \Db(\Coh X)$, the nilpotent element $J$ of
	  the second cohomology class of the quintic, and $[1:x]\in
	  \bP^{1}$, central charges of quintic periods $Z_{x}(E)$
	  \cite{Hos00,Hos04} are defined as follows:
	  \begin{equation}\label{central}
	   Z_{x}(E)=\int_{X}\ch(F)w(x,\frac{J}{2\pi i})\Todd X.
	  \end{equation}
	  We obtain quintic periods as central charges of objects.  For
	  each object $E\in \Db_{G}(\Coh X)$, we define $Z_{x}(E)$ with
	  Equation \ref{central} by forgetting the equivariance.  As
	  explained in \cite[Section 8.2.2]{HorRom},
	  $\frac{\Gamma(1+5\frac{J}{2\pi i})}{\Gamma(1+\frac{J}{2\pi
	  i})^{5}}\Todd X$ of $w(x,\frac{J}{2\pi i})\Todd X$ is the {\it
	  Gamma class} of $X$ \cite{KKP,Iri09,Iri11}.

	  The following two theorems are almost identical, but we put
	  them separately for our later discussions.  By $\mo
	  A_{3}^{\otimes 5}$, we denote the category of representations
	  of the quiver $A_{3}^{\otimes 5}$, which is recalled in
	  Section \ref{sec:HMS}.  Near the large complex structure
	  limit, we have the following theorem.

	  \begin{theorem}\label{thm:sheaf}
	   For $\Db_{G}(\Coh X)\cong \Db(\mo A_{3}^{\otimes 5})\cong
	   \FS(F)$, the heart $\mo A_{3}^{\otimes 5}$, and central charges
	   $Z_{x}$ of quintic periods near the large complex structure
	   limit $x=0$, we have stability conditions on the heart such
	   that each stable object is isomorphic to a Lagrangian vanishing
	   cycle and an equivariant coherent sheaf of the Beilinson basis
	   with a shift.
	  \end{theorem}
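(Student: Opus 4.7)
The plan is to transport the problem across the HMS equivalences to the purely algebraic category $\Db(\mo A_{3}^{\otimes 5})$, build the stability conditions by assigning central charges to the finitely many simple objects of the standard heart $\mo A_{3}^{\otimes 5}$, and then translate the resulting stable objects back through the two equivalences to identify them with Beilinson-type equivariant sheaves and with Lagrangian vanishing cycles.

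First I would enumerate the simple objects $S_{i}$ of $\mo A_{3}^{\otimes 5}$ as the vertex-simples of the tensor quiver. Under the algebraic side of HMS, each $S_{i}$ corresponds to an object of the Beilinson basis in $\Db_{G}(\Coh X)$ up to a fixed homological shift; this step is essentially formal, since the equivalence $\Db_{G}(\Coh X)\cong \Db(\mo A_{3}^{\otimes 5})$ arises by tilting along a full exceptional collection whose left dual consists of the Beilinson-type equivariant sheaves, and the resulting simples of the quiver heart are the duals of the exceptional collection up to shift. Under the symplectic side, $S_{i}$ corresponds to a Lagrangian vanishing cycle, because $\FS(F)$ is generated by the Seidel thimbles whose Floer endomorphism algebra is quasi-isomorphic to $A_{3}^{\otimes 5}$, and the thimbles become the simples of the associated heart.

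The technical core is the asymptotic computation of $Z_{x}(S_{i})$ near $x=0$. For each simple I would substitute its Beilinson representative into Equation \ref{central}, expand $\omega(x,p)$ as a power series around $x=0$, and read off the leading $\log x$ behaviour of the resulting quintic period. The simples split into classes according to the dominant power of $\log x$, reflecting the tensor-grading on the quiver, and one then verifies that $\Arg Z_{x}(S_{i})$ lies in a fixed subinterval of $(0,\pi]$ for $x$ in a common punctured neighbourhood of $0$. I expect the main obstacle to be establishing this uniform positivity of the imaginary parts: different simples are controlled by different terms of the expansion, and one must rule out that the sub-leading contributions flip the sign of the imaginary part for any of them inside a single neighbourhood of the large complex structure limit. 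This is the one place where actual arithmetic with the hypergeometric series $\omega(x,p)$ and the Gamma class cannot be avoided.

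Once this uniform positivity is in hand, the stability condition follows essentially for free. The heart $\mo A_{3}^{\otimes 5}$ is of finite length with finitely many simples, so any prescription of $Z_{x}$-values on simples taking values in $\bH\cup \bR_{<0}$ extends uniquely to a stability function on the whole heart; the Harder--Narasimhan property and the local-finiteness required for a stability condition of Bridgeland type are automatic in the finite-length setting. Semistable objects of a given phase are then iterated self-extensions of a single simple, so the stable objects are precisely the $S_{i}$ themselves, and transporting back through the two HMS equivalences identifies each of them with a Beilinson equivariant sheaf (with the recorded shift) and with a Lagrangian vanishing cycle, as claimed.
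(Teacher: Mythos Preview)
Your overall architecture (transport to the quiver heart, identify the simples with Beilinson sheaves and with vanishing cycles, compute the asymptotics of the central charges) is the same as the paper's. The genuine gap is in the third step: you aim to show that $\Arg Z_{x}(S_{i})$ lies in a fixed subinterval of $(0,\pi]$ and then invoke the standard ``finite-length heart $+$ values in $\bfH$ $\Rightarrow$ Bridgeland stability condition'' argument. This is precisely what cannot be done here. The central charges $Z_{x}(s)$ depend only on $\sum_{j} s_{j}$ through the formula $Z_{x}(s)=Z_{x}(\tau^{-\sum s_{j}}(\cO_{X}))$, and the paper computes them via the monodromy matrix $M_{\infty}$ acting on the period vector $\Pi_{B}(x)$. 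The asymptotics near $x=0$ show that the successive differences $a_{i}=\Arg\bigl(M_{\infty}^{i+1}\Pi_{B}(x)[4]\bigr)-\Arg\bigl(M_{\infty}^{i}\Pi_{B}(x)[4]\bigr)$ are each close to $\pi$; so the five distinct central-charge values wind roughly twice around the origin and cannot be confined to any half-plane. This is exactly why the paper introduces the relaxed Definition~\ref{def:stab}, in which phases $\phi_{Q}$ are real numbers chosen among the branches of $\Arg Z(Q)$ and the only constraint is $|\phi_{Q}-\phi_{Q'}|<\pi$ whenever $\Ext^{1}(Q,Q')\neq 0$.

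The actual verification the paper carries out, and which your outline is missing, is therefore not ``imaginary parts positive'' but the following: assign to each simple $s$ the phase obtained by lifting $\Arg Z_{x}(s)$ so that the phase increases by approximately $\pi$ as $\sum_{j} s_{j}$ increases by one, and then check condition~(2) of Definition~\ref{def:stab}. That check is easy because $\Ext^{1}(s,s')\neq 0$ only along an arrow $s\to s'$ of the tensor quiver, i.e.\ only when $\sum s'_{j}=\sum s_{j}+1$, and the phase difference is then $a_{i}<\pi$. Your proposed obstacle (ruling out sign-flips of the imaginary part from subleading terms) is not the real issue; the real input is the monodromy computation showing $0<a_{3}<a_{2}<a_{1}<a_{0}<\pi$, together with the observation that the $\Ext^{1}$-structure of the quiver only couples adjacent levels. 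Once you replace ``values in $\bfH$'' by this phase-lift argument, the rest of your outline (HN filtrations from finite length, stables $=$ simples, translation through HMS) goes through as you wrote it.
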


	  Near the Gepner point, we have the following theorem.

	  \begin{theorem}\label{thm:lag}
	   For $\Db_{G}(\Coh X)\cong \Db(\mo A_{3}^{\otimes 5})\cong
	   \FS(F)$, the heart $\mo A_{3}^{\otimes 5}$, and central charges
	   $Z_{x}$ of quintic periods near the Gepner point $x=\infty$, we
	   have stability conditions on the heart such that each stable
	   object is isomorphic to a Lagrangian vanishing cycle and an
	   equivariant coherent sheaf of the Beilinson basis with a shift.
	  \end{theorem}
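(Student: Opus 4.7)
The plan is to run a proof parallel to that of Theorem \ref{thm:sheaf}, exchanging the asymptotic analysis at the large complex structure limit for asymptotics at the Gepner point $x=\infty$. The heart $\mo A_{3}^{\otimes 5}$ is of finite length (as the module category of a finite-dimensional algebra), so to build a stability condition of Bridgeland type on it, it suffices to produce a central charge whose restriction to the simples lies in $\bH\cup \bR_{<0}$; Harder--Narasimhan filtrations and local finiteness then follow automatically, and the identification of simples with Lagrangian vanishing cycles and with shifted Beilinson basis sheaves is supplied by HMS.

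First, I would fix a basis $\{\varpi_k(x)\}_{k=1}^{4}$ of solutions of the Picard--Fuchs equation near $x=\infty$ associated with the indicial roots $k/5$, so that $\varpi_k(x)=x^{-k/5}(c_k+O(x^{-1}))$ and the Gepner monodromy $T_\infty$ acts diagonally by $\exp(-2\pi i k/5)$. Using the hypergeometric representation for $w(x,J/(2\pi i))$ together with its analytic continuation from the large complex structure limit (via Mellin--Barnes, or the connection formulae already worked out in \cite{CdGP,Hos00,Hos04}), I would rewrite each central charge $Z_{x}(E)$ of Equation \ref{central} as an explicit $\bC$-linear combination of the $\varpi_k$, with coefficients determined by the equivariant Chern character of $E$ and the Gamma class.

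Second, I would use HMS in the form $\Db_{G}(\Coh X)\cong \Db(\mo A_{3}^{\otimes 5})\cong \FS(F)$ to identify the simple objects of $\mo A_{3}^{\otimes 5}$ with shifts of equivariant sheaves of the Beilinson basis on the coherent side, and with Lagrangian vanishing cycles of a morsification of $F$ on the Fukaya--Seidel side. Once stability of the simples is established, this identification yields the second clause of the theorem for free; the computation of the coefficients from step one reduces to a finite list of explicit numerical invariants.

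The main obstacle is the phase estimate. I must verify that for $|x|$ sufficiently large and $\arg x$ in a prescribed sector around $\infty$, the phases of $Z_x(S)$ for every simple $S$ of the heart stay inside a half-plane of angular width strictly less than $\pi$. By step one the leading behavior $Z_x(S)=\sum_k a_{S,k}\,x^{-k/5}+\cdots$ forces these phases to converge as $|x|\to\infty$ to a finite collection of \emph{Gepner phases} dictated by the Gamma class and the monodromy eigenvalues; controlling these finitely many limiting directions and rotating by a global phase then places them all in $(0,1]$, producing the required stability function on the heart. This asymptotic analysis near the orbifold point, rather than near a maximally unipotent point, is where the argument differs substantively from Theorem \ref{thm:sheaf} and constitutes the analytic heart of the proof.
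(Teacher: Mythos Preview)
Your proposal has a genuine gap at the step you yourself flag as the main obstacle. Near the Gepner point the five distinct central charges $Z_x(s)$ of the simples of $\mo A_{3}^{\otimes 5}$ are, to leading order, $\xi^{i}(1-\xi)x$ for $i=0,\dots,4$ (this is exactly the computation the paper carries out, using the connection matrix $N$ and the Gepner monodromy $M$ acting on $\Pi_B^{\infty}(x)$). These five values are the vertices of a regular pentagon centered at the origin, so their arguments are spread over an interval of length $\tfrac{8\pi}{5}>\pi$. No global rotation can place them all in $\bfH=\bH\cup\bR_{<0}$, and hence your strategy of producing an ordinary Bridgeland stability function on the finite-length heart fails outright: the hypothesis ``the phases of $Z_x(S)$ \dots stay inside a half-plane of angular width strictly less than $\pi$'' is simply false at the Gepner point.

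The paper gets around this not by a sharper estimate but by using the relaxed notion of stability condition in Definition~\ref{def:stab}, in which phases $\phi_Q\in\bR$ are chosen lifts of $\Arg Z(Q)$ rather than values in $(0,\pi]$. Concretely, the simple $s$ is assigned the phase that increases by approximately $\tfrac{2\pi}{5}$ each time $\sum s_i$ increases by one (so the total spread of phases over the $1024$ simples is about $6\pi$). The only non-automatic axiom is then condition (2): whenever $\Ext^1(s,s')\neq 0$ one has an arrow $s\to s'$, hence $\sum s'_i=\sum s_i+1$ and $|\phi_s-\phi_{s'}|\approx\tfrac{2\pi}{5}<\pi$. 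The Harder--Narasimhan property and condition (3) follow from the standard quiver argument once the simples are declared stable with these ordered phases. Your proposal misses this mechanism entirely; to repair it you would need to abandon the attempt to land in $\bfH$ and instead verify Definition~\ref{def:stab} directly, which is precisely the paper's route.
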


	  As its explicit forms recalled in the proofs of Theorems
	  \ref{thm:sheaf} and \ref{thm:lag}, we have period vectors
	  $\Pi_{B}(x)$ and $\Pi^{\infty}_{B}(x)$ consisting of quintic
	  periods near the large complex structure limit and the Gepner
	  point and we have the connection matrix $N$ for the analytic
	  continuation such that $\Pi_{B}(x)=N\Pi^{\infty}_{B}(x)$.

	  On stability conditions, we have dilation and rotation given
	  by multiplications on central charges \cite{Oka06a}.  For
	  example, gauge freedom \cite{KleThe} gives multiplications on
	  central charges.  This gives wall-crossings of second kind for
	  free.

	  On numerical predictions of the mirror symmetry, stability
	  conditions of Theorem \ref{thm:sheaf} are important. However,
	  stability conditions in Theorem \ref{thm:lag} give
	  wall-crossings as follows.

   	  \begin{corollary}\label{cor:analytic}
	   Stability conditions in Theorem \ref{thm:lag} deform into
	   ones in Theorem \ref{thm:sheaf} with wall-crossings of second
	   kind, which are different from ones of dilation and rotation.
	  \end{corollary}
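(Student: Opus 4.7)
The plan is to use the connection matrix $N$ with $\Pi_{B}(x)=N\Pi^{\infty}_{B}(x)$ from the proofs of Theorems \ref{thm:sheaf} and \ref{thm:lag} to build an explicit one-parameter deformation of stability conditions along a path in the $x$-line, and then show that this deformation cannot be realized purely by dilation and rotation, so that genuine HRS tiltings must intervene. First I would fix a path $\gamma\colon[0,1]\to \bP^{1}\setminus\{0,\infty,\text{conifold}\}$ connecting a point near $x=\infty$ to a point near $x=0$, and consider the continuous family of central charges $Z_{\gamma(t)}$ on $\Db_{G}(\Coh X)$; at $t=0$ this recovers the central charge of Theorem \ref{thm:lag}, and at $t=1$ that of Theorem \ref{thm:sheaf}.

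Using the asymptotic expansions of $\Pi^{\infty}_{B}$ and $\Pi_{B}$ that underlie those theorems, I would monitor the phases of the central charges of the stable objects (the Lagrangian vanishing cycles, equivalently the equivariant coherent sheaves of the Beilinson basis) as $t$ varies. Each $t$ at which two such phases coincide is a wall; crossing it replaces the heart by its HRS tilt \cite{HRS} at the corresponding torsion pair. The local-finiteness of stability conditions of Bridgeland type, recorded in Section \ref{sec:claims}, ensures that these tiltings are unambiguously defined and can be composed across a finite collection of walls along $\gamma$.

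Finally, I would distinguish the resulting sequence of tiltings from the $\bC$-action by dilation and rotation, which multiplies the entire central-charge vector by a single invertible scalar and leaves the set of semistable objects unchanged. Since the connection matrix $N$ is not a scalar multiple of the identity, the relation $\Pi_{B}=N\Pi^{\infty}_{B}$ cannot be produced by any such scalar action, so at least one wall crossed along $\gamma$ must be genuinely a wall-crossing of second kind in the sense of \cite{KonSoi08}. The main obstacle is the tracking step: one must control the phases of all relevant central charges along $\gamma$ well enough to locate each wall, pin down the matching torsion pair, and verify that the resulting finite composition of HRS tiltings carries the initial heart $\mo A_{3}^{\otimes 5}$ back to itself. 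The asymptotic estimates already used in Theorems \ref{thm:sheaf} and \ref{thm:lag}, together with the explicit entries of $N$, should supply the information needed; the delicate part is interpolating them across the intermediate regime where neither asymptotic expansion dominates.
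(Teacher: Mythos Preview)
Your proposal takes a much harder road than the paper, and the difficulty you flag at the end is exactly the reason the paper does not go that way. The paper's proof never follows a path in the $x$-line, never uses the connection matrix $N$, and never has to control the intermediate regime. Instead it exploits two facts already established in the proofs of Theorems \ref{thm:sheaf} and \ref{thm:lag}: both stability conditions live on the \emph{same} heart $\mo A_{3}^{\otimes 5}$, have the \emph{same} collection of stable objects (the simple representations $s$), and have phases in the \emph{same} cyclic order indexed by $\sum s_{i}$. One may therefore deform the central charges abstractly in $\Hom_{\bZ}(K(\cT),\bC)$, moving the five values from the Gepner configuration (successive angular differences $\approx 2\pi/5$) to the large-complex-structure configuration (successive differences $\approx \pi$) while preserving the order. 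Lemma \ref{lem:deform} then applies directly: since the relative phase spacings change, there exist $\phi$ and a semistable $Q$ with $\phi^{\sigma}_{Q}<\phi<\phi^{\sigma'}_{Q}$, so the tilted hearts $\cA^{\sigma}_{\phi}$ and $\cA^{\sigma'}_{\phi}$ differ. That is precisely a wall-crossing of second kind, and because dilation and rotation shift all phases by the same amount (hence preserve all phase differences), this cannot be achieved by dilation and rotation alone.

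There is also a conceptual slip in your plan. You write that a wall occurs ``at each $t$ at which two such phases coincide'' and that crossing it ``replaces the heart by its HRS tilt.'' In the terminology of \cite{KonSoi08} used here, coincidence of phases of semistables governs wall-crossings of the \emph{first} kind (appearance or disappearance of stable objects); wall-crossings of the \emph{second} kind are changes of the tilted heart $\cA^{\sigma}_{\phi}$ of Remark \ref{rmk:stab_def}, which occur when the phase of some semistable passes a fixed threshold $\phi$, not when two semistable phases meet. In the situation at hand the set of stable objects never changes along the deformation, so there is no first-kind wall to cross and no sequence of HRS tilts to compose; the heart stays $\mo A_{3}^{\otimes 5}$ throughout, and the content of the corollary is just that the family of tilted hearts $\{\cA^{\sigma}_{\phi}\}_{\phi}$ is genuinely different at the two endpoints. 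Your argument that $N$ is not a scalar matrix is in the right spirit, but the cleaner and sufficient observation is the one above about phase spacings.
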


	  Though we are taking components of period vectors simply as
	  some complex functions as per the last claim in Section
	  \ref{sec:claims}, but under the mirror symmetry, quotients
	  $\frac{\Pi_{B}(x)[i]}{\Pi_{B}(x)[1]}$ of the components are
	  coordinates of complexified K\"{a}hler classes of the
	  quintic. So, the following corollary is exactly as expected by
	  the mirror symmetry and the theory of Bridgeland stability
	  conditions.

	  \begin{corollary}\label{cor:analytic_deform}
	   For stability conditions in Theorem \ref{thm:sheaf},
	   quotients $\frac{\Pi_{B}(x)[i]}{\Pi_{B}(x)[1]}$ give
	   wall-crossings of second kind, which are different from ones
	   of dilation and rotation.
	  \end{corollary}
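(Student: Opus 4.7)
The plan is to combine the explicit $x$-dependence of the period vector $\Pi_{B}(x)$ furnished by Theorem \ref{thm:sheaf} with the elementary observation that a dilation--rotation acts by a common non-zero scalar multiplication on every central charge, hence leaves all quotients $\Pi_{B}(x)[i]/\Pi_{B}(x)[1]$ invariant. This immediately separates the deformations obtained by varying $x$ from those obtained by dilation and rotation: it suffices to exhibit one quotient that moves with $x$ in order to conclude the deformations are of a different nature.

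First I would recall from the proof of Theorem \ref{thm:sheaf} the explicit description of the components of $\Pi_{B}(x)$ as $\bZ$-linear combinations of $\omega(x,\tfrac{k}{5})$, equivalently of the Frobenius basis obtained by expanding the hypergeometric series in powers of $\log x$. Different components involve different maximal powers of $\log x$, so the quotients $\Pi_{B}(x)[i]/\Pi_{B}(x)[1]$ are genuinely multivalued, non-constant holomorphic functions on a punctured neighbourhood of $x=0$, and they are not all proportional to one another. In particular they cannot simultaneously be kept fixed along any curve of dilation--rotation transformations acting on the stability condition.

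Next I would upgrade this non-triviality into an actual wall-crossing of second kind in the sense of \cite{KonSoi08}. Since the central charges of the distinguished stable objects (the shifted Beilinson basis, equivalently Lagrangian vanishing cycles) vary independently along a suitable arc in the parameter $x$, the phases of some pair of stable objects must cross. At such a crossing the heart $\mo A_{3}^{\otimes 5}$ must be replaced by its tilt \cite{HRS} at the torsion pair singled out by these phases, and the new heart is not a shift of the original one. Hence the deformation induces a tilting that is not realised by any dilation--rotation, which is the content of the corollary.

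The main obstacle I anticipate is pinpointing a concrete phase crossing inside the family of stability conditions given by Theorem \ref{thm:sheaf}, as opposed to a deformation that stays inside a single chamber. This I would handle using the asymptotic form of $\omega(x,\tfrac{k}{5})$ near $x=0$: the logarithmic terms in the different components contribute at different rates as $x$ circles the large complex structure limit, so under the local monodromy (or along a radial arc towards $0$) the relative phases rotate at unequal speeds and must cross, producing the required tilting.
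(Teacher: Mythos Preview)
Your opening observation is correct and is essentially how the paper separates the $x$-deformation from dilation and rotation: the quotients $\Pi_B(x)[i]/\Pi_B(x)[1]$ are invariants of dilation and rotation, and they move with $x$. The gap is in how you then produce the wall-crossing of second kind. You argue that the phases of some pair of stable objects must cross along an arc in $x$, and that this crossing forces a tilt. But in the regime of Theorem~\ref{thm:sheaf} this does not happen: the proof there shows that as $x\to 0$ along the positive real axis the phase gaps $a_i$ all tend to $\pi$ while the ordering $\pi>a_0>a_1>a_2>a_3>0$ persists, so the phases of the simple stable objects never cross one another. A crossing of phases between two stable objects is the setup for a wall-crossing of \emph{first} kind (a change in the set of stable objects), not second kind, and the paper explicitly notes that no first-kind wall-crossing is observed for $A_3^{\otimes 5}$ with quintic-period central charges.

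The paper's argument is both simpler and different in kind: it applies Lemma~\ref{lem:deform}. That lemma requires only that two stability conditions on the same heart share the same semistable objects in the same order, and that the phase of some semistable $Q$ passes a fixed real threshold $\phi$; then the tilted hearts $\cA^{\sigma}_{\phi}$ and $\cA^{\sigma'}_{\phi}$ differ, which is by definition a wall-crossing of second kind. Since the phase gaps $a_i$ genuinely change as $x\to 0$ (all drifting toward $\pi$), the absolute phases drift and certainly cross any fixed $\phi$, so Lemma~\ref{lem:deform} applies directly. Your proposal should replace the ``phases of a pair of stable objects cross'' mechanism with this ``a phase crosses a fixed threshold $\phi$'' mechanism; the former does not occur here, the latter does.
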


	   In Corollary \ref{cor:analytic_deform}, we would like to
	   clarify how quotients of quintic periods give non-trivial
	   properties of stability conditions, but Corollary
	   \ref{cor:analytic_deform} can be a part of Corollary
	   \ref{cor:analytic}, if we include local deformation.
	   
	   Let us discuss the mirror map $t(x)$\cite{CdGP}:
	    \begin{equation}\label{eq:mirror_map}
	     t(x)=\frac{\Pi_{B}(x)[2]}{\Pi_{B}(x)[1]}.
	    \end{equation}

	    We define stability conditions, which are, by Lemma
	    \ref{lem:sheaf_modified}, asymptotically the same as ones in
	    Theorem \ref{thm:sheaf}.  We also call these stability
	    conditions as stability conditions near the large complex
	    structure limit. We have the following corollary.

    	    \begin{corollary}\label{cor:mirror_map}
	     For stability conditions near the large complex structure
	     limit in Lemma \ref{lem:sheaf_modified}, by dilation and
	     rotation with the quintic period $w(x,0)$, the sum of
	     distinct central charges of stable objects is the mirror map.
	    \end{corollary}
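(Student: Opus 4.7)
The plan is to read the stable objects and their central charges directly off Lemma \ref{lem:sheaf_modified} and then apply the rescaling explicitly. Since the modified stability conditions are asymptotically those of Theorem \ref{thm:sheaf}, their stable objects are (up to shifts) the equivariant coherent sheaves in the Beilinson basis, and their central charges are specific $\bZ$-linear combinations of the components $\Pi_B(x)[i]$ of the period vector near the large complex structure limit.

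First, I would record the Chern characters of the (modified) Beilinson basis and pair them against $\ch(E)\,w(x,J/2\pi i)\,\Todd X$ via Equation \ref{central}, thereby writing $Z_x(E)$ for each stable $E$ as an explicit $\bZ$-linear combination of the $\Pi_B(x)[i]$. Next, I would perform the dilation-rotation by $w(x,0)^{-1}$. Since $\Pi_B(x)[1]=w(x,0)$ is the fundamental period with trivial monodromy at $x=0$, this normalization sends each $Z_x(E)$ to a ratio built from $\Pi_B(x)[i]/\Pi_B(x)[1]$; in particular the mirror map $t(x)=\Pi_B(x)[2]/\Pi_B(x)[1]$ of Equation \ref{eq:mirror_map} enters precisely through those $E$ whose Chern character pairs nontrivially with the $\Pi_B(x)[2]$-component.

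Then I would sum the \emph{distinct} values of the rescaled central charges over the stable objects. Although the Beilinson basis furnishes many stable objects, only finitely many values occur among the $Z_x(E)/w(x,0)$: coincidences arise from the $G$-action on the equivariant objects and from the shifts chosen in Lemma \ref{lem:sheaf_modified}. A direct bookkeeping of these coincidences should show that, among distinct representatives, the contributions from $\Pi_B(x)[i]$ with $i\neq 2$ cancel, leaving exactly $\Pi_B(x)[2]/\Pi_B(x)[1]=t(x)$.

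The main obstacle will be controlling this combinatorial cancellation: one must pin down the exact list of distinct central charges among the stable objects and verify that the integer coefficients multiplying $\Pi_B(x)[0]$, $\Pi_B(x)[1]$, $\Pi_B(x)[3]$ and any higher components sum to zero over the distinct values. Once Lemma \ref{lem:sheaf_modified} is made fully explicit, this becomes a finite Chern-character computation, but it is precisely the place where the choice of modification (as opposed to the unmodified stability condition of Theorem \ref{thm:sheaf}) earns its keep, by arranging that the surviving sum is the mirror map itself rather than some more complicated combination of periods.
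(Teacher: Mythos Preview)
Your plan is headed in the right direction, but it overlooks the one-line mechanism that actually drives the cancellation, and replaces it with a Chern-character bookkeeping that is both unnecessary and not quite aligned with how the central charges are defined in this setup.

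In Lemma \ref{lem:sheaf_modified} the stable objects are the simple representations $s$ of $\mo A_{3}^{\otimes 5}$, and by construction $Z'_{x}(s)=M_{\infty}^{-\sum s_{i}}\Pi_{B}(x)[4]+\tfrac{1}{5}\Pi_{B}(x)[2]$. Since $\tau^{5}\cong[2]$ acts trivially on central charges, the value of $Z'_{x}(s)$ depends only on $\sum s_{i}\bmod 5$, so there are exactly five distinct central charges, indexed by $i=0,\dots,4$. The entire cancellation you anticipate is then the single identity $\sum_{i=0}^{4}M_{\infty}^{i}=0$, which holds because $M_{\infty}^{5}=I$ and $1$ is not an eigenvalue of $M_{\infty}$. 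Summing the five distinct values therefore kills the $M_{\infty}^{-i}\Pi_{B}(x)[4]$ part outright and leaves $5\cdot\tfrac{1}{5}\Pi_{B}(x)[2]=\Pi_{B}(x)[2]$; dividing by $w(x,0)=\Pi_{B}(x)[1]$ gives $t(x)$. This is exactly the paper's proof.

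Two concrete adjustments to your proposal: first, there is no need to pass through Chern characters and Equation \ref{central}; the paper already packages the central charges as $M_{\infty}^{-i}\Pi_{B}(x)[4]$, and working directly with the monodromy matrix is what makes the argument short. Second, the role of the modification in Lemma \ref{lem:sheaf_modified} is not to rearrange coincidences among stable objects, but simply to add the constant $\tfrac{1}{5}\Pi_{B}(x)[2]$ to each central charge, so that after the monodromy sum vanishes something nonzero---namely $\Pi_{B}(x)[2]$---remains. Without the modification the sum of distinct central charges would be zero, not the mirror map.
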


	   \subsection{An application in the theory 
    of modular forms and periods}\label{sec:quasi_intro}

	   For a stability condition of $\Db_{G}(\Coh X)$, let us define
	   an object of $\Db(\Coh X)$ to be stable if its equivariant
	   object of $\Db_{G}(\Coh X)$ is stable. Then, for stability
	   conditions in Theorems \ref{thm:sheaf} and \ref{thm:lag}, we
	   obtain a stable spherical object \cite{SeiTho} of a
	   3-Calabi-Yau category $\Db(\Coh X)$ and motivic
	   Donaldson-Thomas invariants of the quantum dilogarithm
	   \cite{FadKas,KonSoi08,Kel,Qiu}.

	   Though generally expected, attaching a quasimodular or modular
	   form to quintic periods has its intrinsic difficulty, since
	   monodromy actions on quintic periods are not compatible with
	   ones of the modular group \cite{Zag12}. However, for motivic
	   Donaldson-Thomas invariants, we have the motivic variable $q$.

	   For $q=e^{2\pi i \tau}$, let $G_{2}(\tau)$ be the second
	   Eisenstein series defined as follows:
	   \begin{equation*}
	    G_{2}(\tau)=(2\pi
	     i)^{2}\left(-\frac{1}{24}+\sum_{m,r>0} m q^{mr}\right),
	   \end{equation*}
	   which is a quasimodular form \cite{KanZag} and a mock modular
	   form \cite{Zag06}.  As an analog of \cite[Theorem 1.5]{MelOka}
	   for Calabi-Yau surfaces such as K3 surfaces and the cotangent
	   bundle of $\bP^1$, we have the following.

	   \begin{theorem}\label{thm:quasimodular}
	    For each stability condition of central charges of quintic
	    periods in Theorems \ref{thm:sheaf} and \ref{thm:lag} and
	    Lemma \ref{lem:sheaf_modified}, motivic Donaldson-Thomas
	    invariants of each stable object of $\Db(\Coh X)$ give the
	    quasimodular form $G_{2}(\frac{\tau}{2})-G_{2}(\tau)$.
	   \end{theorem}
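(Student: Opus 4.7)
The plan is to reduce the theorem to the computation of the motivic Donaldson--Thomas invariant of a single stable spherical object in the 3-Calabi--Yau category $\Db(\Coh X)$, and then to match its generating function with $G_{2}(\tau/2)-G_{2}(\tau)$ via an elementary identity for sums of the form $\sum_{m\geq 1} m\,x^{m}=x/(1-x)^{2}$.

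First, by Theorems \ref{thm:sheaf} and \ref{thm:lag} and Lemma \ref{lem:sheaf_modified}, every stable object of the chosen stability condition on $\Db_{G}(\Coh X)$ is, up to shift, a Lagrangian vanishing cycle in $\FS(F)$ and an equivariant coherent sheaf from the Beilinson basis. Forgetting equivariance as in the definition immediately preceding the statement, I obtain a spherical object $E$ in the 3-Calabi--Yau category $\Db(\Coh X)$ in the sense of \cite{SeiTho}. This lets me apply the motivic framework of \cite{KonSoi08}: the motivic DT series of a single stable spherical object in a CY3 category is a single quantum dilogarithm factor in the motivic variable $q$, as described in \cite{FadKas,Kel,Qiu}. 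This is the input corresponding to the CY2 input used in \cite[Theorem 1.5]{MelOka}.

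Next I would extract the quasimodular form. Taking the logarithmic $q$-derivative of this quantum dilogarithm factor and reorganizing the resulting double series by the power of $q$, a short manipulation yields
\begin{equation*}
\sum_{r\geq 1}\left(\frac{q^{r/2}}{(1-q^{r/2})^{2}}-\frac{q^{r}}{(1-q^{r})^{2}}\right)
 = \sum_{r\geq 1}\sum_{m\geq 1} m\bigl(q^{mr/2}-q^{mr}\bigr),
\end{equation*}
and the right-hand side is precisely $G_{2}(\tau/2)-G_{2}(\tau)$ once one notes that the constant $-1/24$ terms in the two Eisenstein factors cancel. The quasimodularity of $G_{2}(\tau/2)-G_{2}(\tau)$ then follows directly from the quasimodularity of $G_{2}$ in \cite{KanZag}, equivalently from its mock modularity \cite{Zag06}.

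I anticipate two main obstacles. First, one must verify rigorously that the objects produced by each of the three stability conditions are genuinely spherical in $\Db(\Coh X)$ after forgetting the $G$-action, with no extraneous self-extensions that would spoil the single-quantum-dilogarithm form of the DT series; the Lagrangian vanishing-cycle picture strongly suggests this, but the argument must survive the shift bookkeeping and the equivariance-forgetting functor. Second, the $q^{1/2}$-versus-$q$ dichotomy responsible for the simultaneous appearance of $G_{2}(\tau/2)$ and $G_{2}(\tau)$ has to be tracked carefully through the refined DT formalism, since adjacent sign or half-shift conventions would lead to a different but superficially similar cousin such as $G_{2}(\tau)-G_{2}(2\tau)$. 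A subsidiary check, expected from wall-crossing invariance but still requiring explicit verification for stability conditions of Bridgeland type (as opposed to strict Bridgeland stability conditions), is that the resulting quasimodular form is independent of which of the three stability conditions one chooses.
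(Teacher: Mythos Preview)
Your overall strategy matches the paper's: forget equivariance to obtain a spherical object $\tau^{i}(\cO_{X})$ in the 3-Calabi--Yau category $\Db(\Coh X)$, identify its motivic DT series with the quantum dilogarithm $\bfE(q^{1/2},z)$, and then extract $G_{2}(\tau/2)-G_{2}(\tau)$. The sphericity obstacle you flag is dispatched in the paper in one line, simply by observing that each $\tau^{i}(\cO_{X})$ is spherical.

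The one point that needs correction is your phrase ``logarithmic $q$-derivative.'' The paper does \emph{not} differentiate in $q$ (equivalently in $\tau$). It introduces the class-counting variable $z=e^{2\pi i\tau'}$, takes the formal logarithm of $\bfE(q^{1/2},z)$ in $z$, differentiates \emph{twice} with respect to $\tau'$, and only then sets $z=1$:
\[
\left.\frac{\partial^{2}}{\partial\tau'^{2}}\log_{z}\bfE(q^{1/2},z)\right|_{z=1}
=(2\pi i)^{2}\sum_{m,r>0} m\bigl(q^{mr/2}-q^{mr}\bigr)
=G_{2}\!\left(\tfrac{\tau}{2}\right)-G_{2}(\tau).
\]
It is the double $\tau'$-derivative that supplies the factor $m$ appearing in your displayed identity; a literal logarithmic $q$-derivative would instead insert factors of $mr/2$ and $mr$ and would not reproduce $G_{2}$. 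So your target series is correct, but the operation you name to reach it is not. Once that wording is fixed, your proof and the paper's are the same.
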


	  \subsection{Relations to existing works}

	  With complexified K\"ahler classes under the correspondences
	  in the last claim in Section \ref{sec:claims}, Douglas
	  \cite[Section 4]{Dou02} argued that approximations of central
	  charges of periods near the large complex structure limit are
	  given by Mukai vectors (cf. \cite[Appendix E]{HonOku}), by
	  which Bridgeland found the profound application \cite{Bri08}
	  for K3 surfaces. Along this line but without $\sqrt{\Todd X}$
	  as explained in \cite[Section 1.4]{BMT}, constructing
	  Bridgeland stability conditions for the quintic has been a
	  significant conjecture \cite{BMT,BBMT}.

	  For non-projective cases, central charges of the
	  Gel'fand-Kapranov-Zelevinski system (GKZ for short) system of
	  the $A_{1}$ singularity \cite{Hos04} and Bridgeland stability
	  conditions for the cotangent bundle of $\bP^1$
	  \cite{Tho,Bri05,Oka06b} have already been discussed in
	  \cite{Tho}.  We have its HMS \cite{IUU}. For the cotangent
	  bundle and a closely related case, in Section \ref{sec:a1}, we
	  confirm analogs of our statements for the quintic.  For the
	  local $\bP^2$ \cite{BayMac,CCG}, though its HMS is still a
	  conjecture, similar statements are expected to hold by
	  Lagrangians.

	 For the quintic, there are other HMS with Novikov rings
	 \cite{NohUed,She}.  We have deformations of stability
	 conditions whose parameter is of the Novikov rings and is of
	 solutions of the Picard-Fuchs equation.

	 Though we mainly focus on the quintic for its significance in
	 the mirror symmetry, we expect that similar statements hold for
	 other Fermat Calabi-Yau varieties by HMS \cite{Oka09,FutUed} and
	 suitable central charges.

	\section{HMS}\label{sec:HMS}
	Let $A_{n}$ be the type-$A$ Dynkin quiver with $n+1$ vertices and
	one-way arrows. Let us recall the following HMS
	\cite{Oka09,FutUed}:
	 \begin{equation}\label{eq:hms}
	  \Db_{G}(\Coh X)\cong \Db(\mo A_{3}^{\otimes 5})\cong \FS(F).
	 \end{equation}

	 In this article, vertices of the quiver $A_{3}^{\otimes 5}$ are
	 indexed by tuples $s=\{s_{1},s_{2},s_{3},s_{4},s_{5}\}$ for
	 $0\leq s_{i}\leq 3$ such that the source and sink vertices are
	 indexed by $\{0,0,0,0,0\}$ and $\{3,3,3,3,3\}$. We have
	 commuting relations on arrows of the quiver $A_{3}^{\otimes 5}$
	 such that the composition of arrows $s\to s'\to s''$ for
	 $s_{i}'=s_{i}+1$ and $s_{j}''=s_{j}'+1$ and $1\leq i,j\leq 5$ is
	 equal to that of $s\to s'''\to s''$ for $s_{j}'''=s_{j}+1$ and
	 $s_{i}''=s_{i}'''+1$.  For vertices $s$ of $\sum s_{i}\leq 2$,
	 we have the following partial figure of the quiver
	 $A_{3}^{\otimes 5}$.
	 \begin{figure}[H]
	  \begin{center}
	   \def\svgwidth{160pt}
	   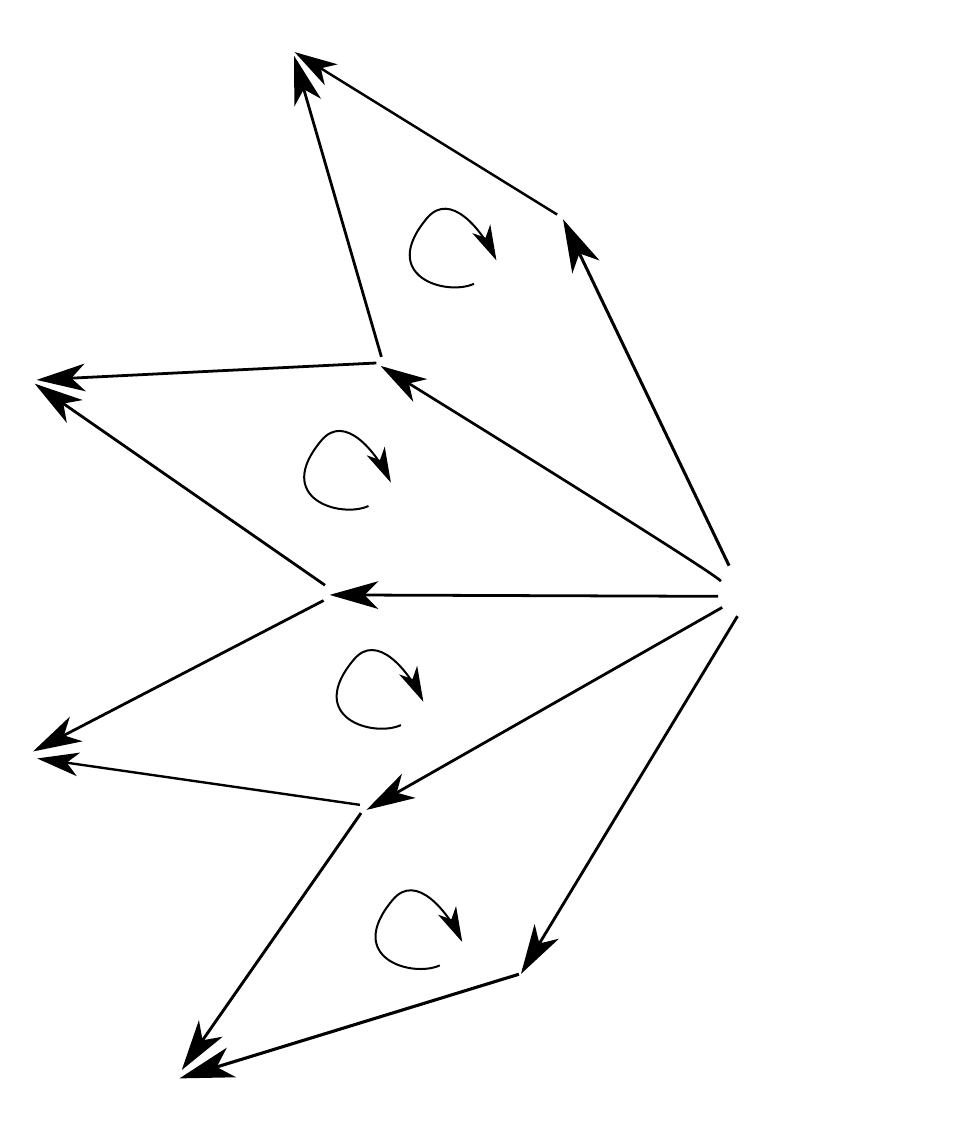
	   \caption{The partial figure of the quiver $A_{3}^{\otimes 5}$}
	  \end{center}
	 \end{figure}
	 For each vertex $s$, the simple representation of $\mo
	 A_{3}^{\otimes 5}$ with the one-dimensional complex vector
	 space at the vertex is also denoted by $s$.

       	 The reader can consult \cite{Aur} for an introduction to
       	 Fukaya-type categories.  We have a morsification of
       	 $x_{i}^{5}:\bC\to \bC$ by A'Campo as discussed in \cite{Sei01}
       	 and its products \cite{AKO}.  For such a morsification of $F$,
       	 which we keep taking in the following, simple representations
       	 of $\mo A_{3}^{\otimes 5}$ and their Ext-algebra correspond to
       	 Lagrangian vanishing cycles in the regular zero locus of the
       	 morsification and their Lagrangian Floer theory in the
       	 formulation of Fukaya-Seidel categories. This is not very
       	 difficult to see, since the Ext algebra is formal as a Koszul
       	 algebra \cite{ConGoe}.  The Ext algebra of simples
       	 representations of the quiver $A_{3}$ is Koszul and tensor
       	 products of Koszul algebras are Koszul \cite{Zac}.

       	 Let us recall that for $\Db(\Coh X)$, we have the
       	 autoequivalence $\tau$ of the monodromy around the Gepner
       	 point; namely, for the spherical twist $T_{\cO_{X}}$ of
       	 $\cO_{X}$ \cite{SeiTho} and each object $E\in \Db(\Coh X)$, we
       	 have
       	 \begin{equation}\label{eq:Gepner}
	  \tau(E)\cong \cO(1)\otimes T_{\cO_{X}}(E)
	 \end{equation}
       	 and $\tau^{5}\cong [2]$.  

       	 For the $n$-th exterior product of the cotangent bundle of
       	 $\bP^{4}$ restricted to the quintic, denoted by $\Omega^{n}$, we
       	 have that $\cO_{X}[3], \tau(\cO_{X}[3])\cong \cO_{X}(1)[1],
       	 \tau^2(\cO_{X}[3])\cong \Omega(2)[2], \tau^3(\cO_{X}[3])\cong
       	 \Omega^2(3)[3], \tau^4(\cO_{X}[3])\cong \Omega^3(4)[4]$
       	 \cite{Asp}.  Objects $\tau^{i}(\cO_{X})[-i]$ are the Beilinson
       	 basis of $\Db(\Coh X)$.  With $\Ext^1$ arrows and the dashed arrow
       	 indicating $[-2]$, we have the following quintic quiver
       	 \cite{DGJT}:
       	 \begin{figure}[H]
	  \begin{center}
	   \def\svgwidth{250pt}
	   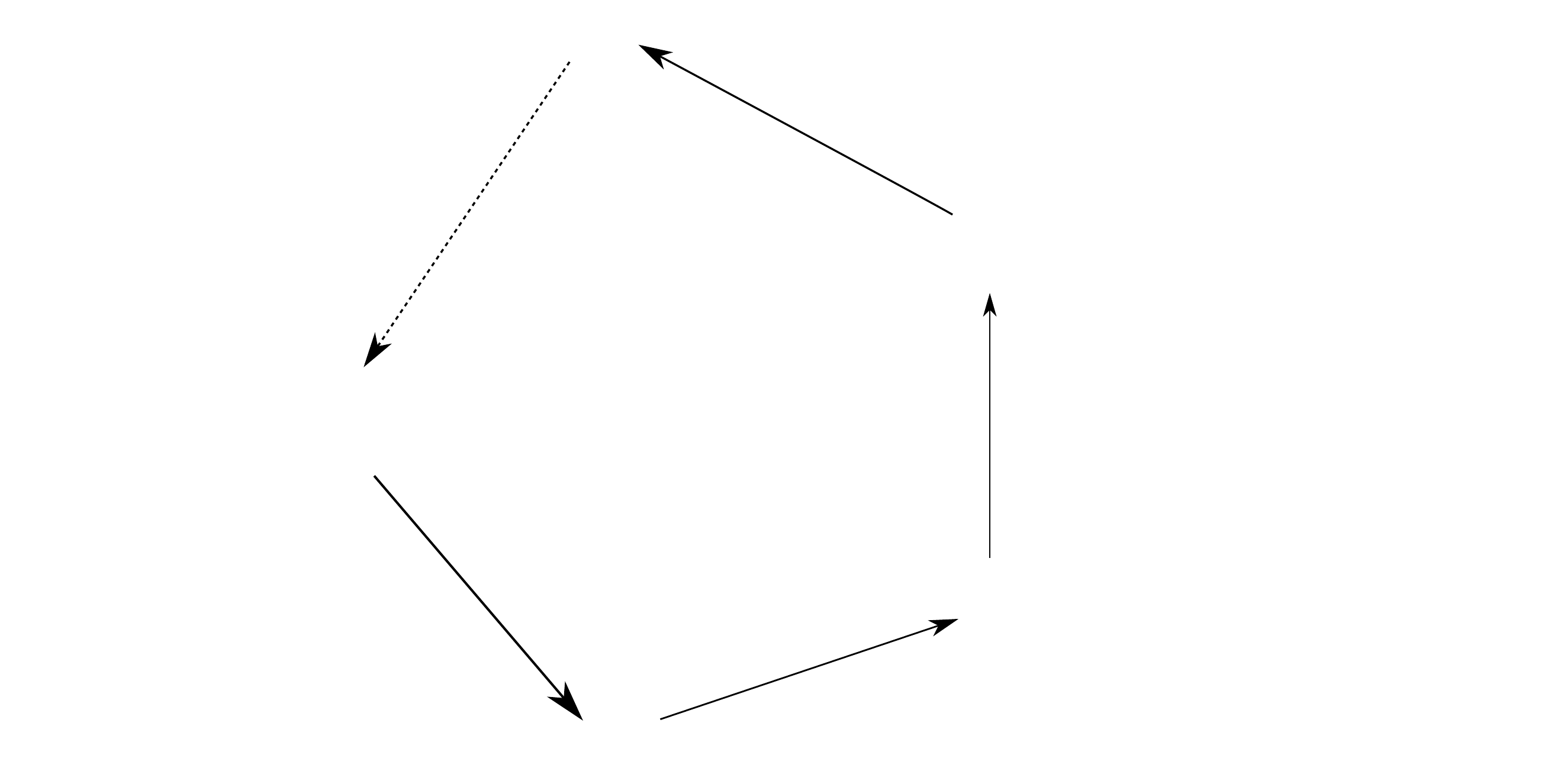
	   \caption{Quintic quiver}
	  \end{center}
	 \end{figure}

       By the Koszul duality, the Ext-algebra of simple representations
       of $\mo A_{3}^{\otimes 5}$ is anti-commutative.  In terms of the
       tensor product of graded matrix factorizations of
       $x_{i}^{5}:\bC\to \bC$ for $1\leq i \leq 5$ \cite{Orl09,KST},
       taking advantages of computations in \cite{ADD}, we see that we
       have objects corresponding to simple representations of $\mo
       A_{3}^{\otimes 5}$. Then, by forgetting the multi-grading ($G$
       equivariance) of the tensor product, simple representations $s$
       give objects $\tau^{-\sum s_{i}}(\cO_{X})$ in the category of
       graded matrix factorizations of $F$ \cite{Orl09}.  Let us mention
       that $\tau$ coincides with the grade shift of the category of
       graded matrix factorizations of $F$.

     \section{Stability conditions}\label{sec:stab}

     Let $\bfH$ be the union of the upper-half plane and the negative
     real line not including the zero.  Let us recall that if the central
     charge of each non-zero object of the category of representations of
     a quiver is in $\bfH$, we always have a Bridgeland stability
     condition on the category.

     We formulate a notion of stability conditions for cases when central
     charges of non-zero objects of an abelian category are not
     necessarily in $\bfH$. For us, this reflects the fact that for
     non-zero objects of the heart in Theorems \ref{thm:sheaf} and
     \ref{thm:lag}, we have quintic periods which are asymptotically some
     several powers of $\frac{\log(x)}{2\pi i}$; this is not the case of
     Remark \ref{rmk:A1}.

     With our formulation of stability conditions, when central charges
     of non-zero objects are not contained in $\bfH$, in general, it is
     highly non-trivial that whether we have a stability condition even
     on the category of representations of a quiver.

     From \cite{Bri07}, let us recall that to give a Bridgeland stability
     condition on a triangulated category for a given family of
     semistable objects is equivalent to give a heart of the triangulated
     category and a central charge on the heart with the
     Harder-Narasimhan property recalled in the following.

       \begin{definition} \label{def:stab}
	For a triangulated category $\cT$ and the Grothendieck group
	$K(\cT)$, a stability condition consists of a heart $\cA$ of the
	triangulated category $\cT$ and a central charge $Z\in
	\Hom_{\bZ}(K(\cT), \bC)$ with the following:
	\begin{enumerate}
	 \item We have {\it semistable objects} $Q\in \cA$ such that 
	       $Z(Q)=m(Q)\exp(\phi_{Q}i)$ for some {\it masses} $m(Q)>0$ and 
	       {\it phases} $\phi_{Q}\in \bR$.
	 \item If $\Ext^{1}(Q,Q')\not\cong 0$ for semistable objects $Q,Q'\in
	       \cA$, then $|\phi_{Q}-\phi_{Q'}|< \pi$.
	 \item If $\phi_{Q'}>\phi_{Q}$, then $\Hom(Q',Q)\cong 0$.
	 \item For each nonzero object $E\in \cA$, we have semistable
	       objects $Q_{i}\in \cA$ such that
	       $\phi_{Q_{i+1}}>\phi_{Q_{i}}$ with the following filtration
	       by short exact sequences:
	       \[
	       \xymatrix@C=.5em{
	       0_{\
	       }\ar@{=}[r]&E_n\ar@{->}[rr]&&E_{n-1}\ar@{->}[rr]\ar[dl]&&E_{n-2}\ar@{->}[rr]\ar[dl]
	       &&\ldots\ar@{->}[rr]&&E_{1}\ar[dl]\ar@{->}[rr]&&E_{0}\ar[dl]\ar@{=}[r]&E_{\ }\\
	       && Q_{n-1}&&Q_{n-2}&&&&Q_{1}&&Q_0 
	       }
	       \]
	\end{enumerate}
	For each phase, semistable objects which can not be obtained by
	non-trivial extensions of semistable objects of the phase are
	called {\it stable}. The above filtration is called {\it the
	Harder-Narasimhan filtration} of the object $E$ and having such
	filtrations for non-zero objects of the heart, originally for
	central charges of semistable objects in $\bfH$, is called the
	Harder-Narasimhan property of the central charge on the heart.
       \end{definition}

	By the third and the fourth conditions in Definition
	\ref{def:stab}, we have the uniqueness of Harder-Narasimhan
	filtrations of each non-zero object up to isomorphisms \cite{GKR}.

	\begin{remark}\label{rmk:stab_def}
	 For a stability condition on a heart $\cA$, we have tiltings of
	 the heart $\cA$ as in {\it loc cite}. For each $\phi\in \bR$, we
	 have the heart $\cA^{\sigma}_{\phi}$ as the extension-closed
	 full subcategory of $\cT$ consisting of semistable objects
	 $Q''\in \cA$ such that $\phi_{Q''}>\phi$ and objects $Q''[1]$ of
	 semistable objects $Q''\in \cA$ such that $\phi_{Q''}\leq \phi$.
	\end{remark}

	 For Bridgeland stability conditions, the second condition in
	 Definition \ref{def:stab} holds automatically for a heart.  It
	 is a simple condition to restrict orders of possible phases of
	 semistable objects for a given central charge. Even when do not
	 have Bridgeland stability conditions, we still have tiltings of
	 certain stability conditions such as of Theorems \ref{thm:sheaf}
	 and \ref{thm:lag}.

	 \begin{proposition}\label{prop:exceptional}
	  Let us assume that for a stability condition $\sigma$ of a
	  heart $\cA$, a central charge $Z$, phases $\phi^{\sigma}_{Q}$,
	  and semistable objects $Q,Q'\in \cA$, we have
	  $\Ext^{2}(Q,Q')\cong 0$ for
	  $\phi^{\sigma}_{Q}>\phi^{\sigma}_{Q'}$ and $\Hom(Q,Q')\cong 0$
	  for $\phi^{\sigma}_{Q}<\phi^{\sigma}_{Q'}$. In addition, let us
	  assume that phases are bounded.  Then, we have a stability
	  condition $\sigma'$ on the titled heart $\cA^{\sigma}_{\phi}$
	  (in the notation of Remark \ref{rmk:stab_def}) for each $\phi\in
	  \bR$ with the central charge $Z$ and semistable objects $Q\in
	  \cA^{\sigma}_{\phi}$ for semistable objects $Q\in \cA$ such that
	  $\phi^{\sigma}_{Q}> \phi$ and semistable objects $Q[1]\in
	  \cA^{\sigma}_{\phi}$ for semistable objects $Q\in \cA$ such that
	  $\phi^{\sigma}_{Q}\leq \phi$.
	 \end{proposition}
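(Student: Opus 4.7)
The proof mimics the Happel--Reiten--Smal\o{}/Bridgeland tilting construction \cite{HRS,Bri07}, where the two extra vanishing hypotheses on $\Ext^2$ and $\Hom$ substitute precisely for the $\bfH$-valuedness of central charges that is automatic in the Bridgeland case. First, $\cA^\sigma_\phi$ is the heart of a bounded $t$-structure: the extension closures in $\cA$ of the $\sigma$-semistables of phase $>\phi$ and of phase $\leq\phi$ form a torsion pair on $\cA$ (immediate from the $\sigma$-HN filtrations), and $\cA^\sigma_\phi$ is the HRS tilt along this pair. The candidate $\sigma'$-semistables are those described in the statement, with phase $\phi^\sigma_Q$ for unshifted $Q$ and phase $\phi^\sigma_Q + \pi$ for shifted $Q[1]$; axiom (1) of Definition \ref{def:stab} is immediate from $Z(Q[1]) = -Z(Q)$.

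\textbf{Axioms (2) and (3).} For two $\sigma'$-semistables that are both unshifted or both shifted, the relevant $\Ext^1$ in $\cT$ equals $\Ext^1_\cA$ between the underlying $\sigma$-semistables and the new phase differences equal the old ones, so axioms (2) and (3) of $\sigma$ transfer directly. The mixed cases are where the two hypotheses enter. For unshifted $Q$ and shifted $Q'[1]$, the Yoneda identity $\Ext^1_\cT(Q, Q'[1]) = \Ext^2_\cA(Q, Q')$ vanishes by the $\Ext^2$-hypothesis since $\phi^\sigma_Q > \phi \geq \phi^\sigma_{Q'}$; conversely $\Ext^1_\cT(Q'[1], Q) = \Hom_\cA(Q', Q)$ vanishes by the $\Hom$-hypothesis since $\phi^\sigma_{Q'} \leq \phi < \phi^\sigma_Q$. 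The nontrivial mixed direction of axiom (3) reduces either to $\Ext^{-1}_\cA = 0$ (automatic since $\cA$ is a heart) or to an $\Ext^1_\cA$ whose vanishing is already forced by axiom (2) of $\sigma$ combined with a phase gap greater than $\pi$.

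\textbf{Axiom (4) and the main obstacle.} For nonzero $E \in \cA^\sigma_\phi$ the torsion pair yields a short exact sequence $0 \to F[1] \to E \to T \to 0$ in $\cA^\sigma_\phi$, where $F, T \in \cA$ have only $\sigma$-semistable subquotients of phase $\leq\phi$ and $>\phi$ respectively. Taking the $\sigma$-HN filtrations of $F$ and $T$ inside $\cA$, shifting $F$'s pieces by $[1]$, and concatenating so that the pieces of $F[1]$ sit below those of $T$ in the resulting filtration of $E$ produces the filtration required by axiom (4); uniqueness up to isomorphism follows from axiom (3) as recalled after Definition \ref{def:stab}. The main subtlety is that the new phases must be strictly decreasing from bottom to top of this concatenation, which requires the smallest $\sigma$-phase appearing in $F$ plus $\pi$ to be strictly greater than the largest $\sigma$-phase appearing in $T$. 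This is precisely where the bounded-phases hypothesis enters: it forces the overall $\sigma$-phase spread to be controlled uniformly in $\phi$, ensuring the junction between the $F[1]$-pieces and the $T$-pieces satisfies the required strict inequality for every $\phi \in \bR$.
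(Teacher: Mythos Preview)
Your verification of axioms (1)--(3) is fine and matches the paper's (terse) argument: the key computations $\Ext^1(Q,Q'[1])\cong\Ext^2(Q,Q')\cong 0$ and $\Ext^1(Q'[1],Q)\cong\Hom(Q',Q)\cong 0$ for $\phi^\sigma_Q>\phi\geq\phi^\sigma_{Q'}$ are exactly what the paper writes down.

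The gap is in your treatment of axiom (4), and it stems from your choice of phase $\phi^{\sigma'}_{Q[1]}=\phi^\sigma_Q+\pi$. With this choice, the junction inequality you need is
\[
\min\{\phi^\sigma_{Q'}:Q'\text{ an HN factor of }F\}+\pi \;>\; \max\{\phi^\sigma_{Q}:Q\text{ an HN factor of }T\},
\]
and ranging over all $E$ this forces the total spread of $\sigma$-phases to be strictly less than $\pi$. Boundedness of phases says only that the spread is finite; it does \emph{not} say it is below $\pi$. So your final sentence (``boundedness \ldots\ ensuring the junction \ldots\ satisfies the required strict inequality'') is not justified, and in fact fails whenever the $\sigma$-phase range is $\geq\pi$.

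The paper handles this differently: it sets $\phi^{\sigma'}_{Q[1]}=\phi^\sigma_Q+(2n+1)\pi$ for a single $n\geq 0$ chosen so large that the interval of shifted phases lies entirely above the interval of unshifted phases. Boundedness is exactly what guarantees such an $n$ exists. With this choice the junction inequality is automatic, axiom (2) for two shifted objects still reduces to axiom (2) for $\sigma$ (the common shift cancels), and the mixed case of axiom (3) becomes the trivial vanishing $\Hom(Q'[1],Q)\cong\Ext^{-1}(Q',Q)\cong 0$ since now every shifted phase exceeds every unshifted phase. Replace your $+\pi$ by $+(2n+1)\pi$ with $n$ chosen this way and your argument goes through.
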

	 \begin{proof}
	  Notice that for semistable objects $Q,Q'\in \cA$ such that
	  $\phi^{\sigma}_{Q}>\phi^{\sigma}_{Q'}$, we have $
	  \Ext^{1}(Q,Q'[1])\cong \Ext^{2}(Q,Q')\cong 0$ and
	  $\Ext^{1}(Q'[1],Q)\cong 0$ by the first assumption.  We take
	  phases $\phi^{\sigma'}_{Q[1]}$ of $Q[1]\in \cA^{\sigma}_{\phi}$
	  for semistable objects $Q\in \cA$ so that
	  $\phi^{\sigma'}_{Q[1]}=\phi^{\sigma}_{Q}+(2n+1)\pi$ for some
	  $n\geq 0$ and the range of phases $\phi^{\sigma'}_{Q[1]}$ do
	  not overlap with that of phases $\phi^{\sigma'}_{Q}$ of other
	  semistable objects of $Q\in \cA^{\sigma}_{\phi}$.
	 \end{proof}

	 We assume $K(\cT)$ is of a finite rank; if not, we replace
	 $K(\cT)$ with its some quotient of a finite rank such as
	 numerical Grothendieck groups \cite{Bri07}.  We consider
	 deformation of central charges in the space $\Hom_{\bZ}(K(\cT),
	 \bC)$, which has the standard topology by the finiteness of
	 $K(\cT)$, and deformation of stability conditions.

	 For a stability condition on a heart $\cA$ and an interval
	 $(a,b)$, let $\cP(a,b)$ be the extension-closed full subcategory
	 of the heart $\cA$ consisting of semistable objects $E$ such
	 that $\phi_{E}\in (a,b)$.

	 \begin{remark}\label{rmk:spaces}
	  With a care on the second condition in Definition
	  \ref{def:stab}, deformation theory of \cite{Bri07} on
	  Bridgeland stability conditions on a heart naturally extends to
	  our cases.  We assume the local-finiteness in {\it loc cite}.
	  In addition, we assume that for each phase $\phi_{Q}$ of a
	  semistable object $Q$ of a heart $\cA$, we have
	  $\epsilon_{\phi_{Q}}> 0$ with the following condition: for
	  phases $\phi_{Q},\phi_{Q'}$ of semistable objects $Q,Q'$ of the
	  heart $\cA$ and objects $E\in \cP(\phi_{Q}-\epsilon_{\phi_{Q}},
	  \phi_{Q}+\epsilon_{\phi_{Q}})$ and $E'\in
	  \cP(\phi_{Q'}-\epsilon_{\phi_{Q'}},
	  \phi_{Q'}+\epsilon_{\phi_{Q'}})$, if $\Ext^{1}(E,E') \not\cong
	  0$ then
	  $|\phi_{Q}-\phi_{Q'}|+\epsilon_{\phi_{Q}}+\epsilon_{\phi_{Q'}}
	  <\pi$.  Both assumptions easily hold for stability conditions
	  in Theorems \ref{thm:sheaf} and \ref{thm:lag}.  For a small
	  open interval contained in
	  $(\phi-\epsilon_{\phi},\phi+\epsilon_{\phi})$ of some phase
	  $\phi$ of a semistable object, we consider deformation of
	  central charges such that central charges of semistable objects
	  whose phases are in the interval stay in the interval.
	  Moreover, we can take small non-overlapping open intervals such
	  that each of them contained in
	  $(\phi-\epsilon_{\phi},\phi+\epsilon_{\phi})$ of some phase
	  $\phi$ of a semistable object and consider simultaneous
	  deformations of central charges.  On the extension-closed full
	  subcategory consisting of semistable objects of phases in the
	  interval, we apply the deformation theory of {\it loc cite}.
	 \end{remark}

	 In Remark \ref{rmk:spaces}, we put deformation of stability
	 conditions on a heart. However, even when we do not have
	 Bridgeland stability conditions, under the assumptions of
	 Proposition \ref{prop:exceptional}, we can tilt the heart with
	 stability conditions and work on deformations of stability
	 conditions on the tilted heart.  For stability conditions in
	 Theorems \ref{thm:sheaf} and \ref{thm:lag}, we put discussions
	 on their specific deformations in Remarks \ref{rmk:deform} and
	 \ref{rmk:Teichmuller}.

	 For our later reference, let us put the following lemma on
	 wall-crossings of second kind for certain deformation of
	 stability conditions, which involve no change of orders of
	 semistable objects but with deformation of central charges.

	  \begin{lemma}\label{lem:deform}
	   For a heart $\cA$, let us assume that we have stability
	   conditions $\sigma,\sigma'$ with central charges $Z$ and $Z'$
	   and phase $\phi^{\sigma},\phi^{\sigma'}$.  In addition, let us
	   assume that for $\sigma,\sigma'$, we have the same collection
	   of semistable objects such that for semistable objects
	   $Q_{1},Q_{2}$, we have
	   $\phi^{\sigma'}_{Q_{1}}<\phi^{\sigma'}_{Q_{2}}$ if and only if
	   $\phi^{\sigma}_{Q_{1}}<\phi^{\sigma}_{Q_{1}}$.  Then, for some
	   $\phi$ and a semistable object $Q$ such that
	   $\phi^{\sigma}_{Q}<\phi<\phi^{\sigma'}_{Q}$, we have a
	   non-trivial wall-crossing of second kind between $\sigma$ and
	   $\sigma'$.
	  \end{lemma}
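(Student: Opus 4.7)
The plan is to locate a semistable object $Q$ whose phase has strictly increased in going from $\sigma$ to $\sigma'$, then choose a real number $\phi$ strictly between $\phi^\sigma_Q$ and $\phi^{\sigma'}_Q$, and finally verify that the tilted hearts $\cA^\sigma_\phi$ and $\cA^{\sigma'}_\phi$ of Remark \ref{rmk:stab_def} genuinely differ. Since the hypotheses fix the collection of semistable objects and the ordering of their phases, any change of the tilted heart is by definition a wall-crossing of second kind in the sense of \cite{KonSoi08}, and the genuine change of heart is what makes it non-trivial.

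First, I would observe that the tilted heart is determined by which semistables lie above or below the cutoff $\phi$, so if $\phi^\sigma_{Q''} = \phi^{\sigma'}_{Q''}$ for every semistable $Q''$, then $\cA^\sigma_\phi = \cA^{\sigma'}_\phi$ for all $\phi$ and no non-trivial wall-crossing occurs; this is the degenerate case that must be excluded. Hence for a non-trivial wall-crossing there must exist a semistable $Q$ with $\phi^\sigma_Q \neq \phi^{\sigma'}_Q$, and after possibly interchanging the roles of $\sigma$ and $\sigma'$ we may assume $\phi^\sigma_Q < \phi^{\sigma'}_Q$.

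Second, I would pick any $\phi \in (\phi^\sigma_Q, \phi^{\sigma'}_Q)$. By the description of tilted hearts in Remark \ref{rmk:stab_def}, the shift $Q[1]$ lies in $\cA^\sigma_\phi$ (because $\phi^\sigma_Q \leq \phi$), while $Q$ itself lies in $\cA^{\sigma'}_\phi$ (because $\phi^{\sigma'}_Q > \phi$). An object and its shift by $[1]$ cannot both belong to the heart of a bounded $t$-structure, so $\cA^\sigma_\phi \neq \cA^{\sigma'}_\phi$, and this genuine change of tilted heart is the claimed wall-crossing. The hypothesis on preservation of phase-ordering ensures that we have not altered the underlying set of semistables, so what we witness is a wall-crossing of second kind rather than of first kind.

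The only real care required, rather than a substantive obstacle, is matching the notion used here with the wall-crossing of second kind of \cite{KonSoi08}: there, a wall of second kind is the locus where the phase of some semistable object crosses a fixed ray, and the categorical effect is exactly the HRS tilting just exhibited. Once this identification is spelled out, the argument reduces to the simple pigeonhole observation in the previous paragraph, and the proof is complete.
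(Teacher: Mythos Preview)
Your proposal is correct and follows essentially the same approach as the paper: the paper's entire proof is the single sentence ``In the notation of Remark~\ref{rmk:stab_def}, hearts $\cA^{\sigma}_{\phi}$ and $\cA^{\sigma'}_{\phi}$ differ,'' and your argument simply unpacks why this is so (namely $Q[1]\in\cA^{\sigma}_{\phi}$ while $Q\in\cA^{\sigma'}_{\phi}$). Your additional paragraphs about finding such a $Q$ and reconciling with the terminology of \cite{KonSoi08} are helpful elaborations, but the core idea is identical.
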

	  \begin{proof}
	   In the notation of Remark \ref{rmk:stab_def}, hearts
	   $\cA^{\sigma}_{\phi}$ and $\cA^{\sigma'}_{\phi}$ differ.
	  \end{proof}

	  \section{Statements and proofs}\label{sec:thms}
	  For each simple representation $s$ of the heart $\mo
	  A_{3}^{\otimes 5}$ and $[1:x]\in \bP^1$, by the definition of
	  Equation \ref{central} and HMS of Equation \ref{eq:hms}, we
	  have the following:
	  \begin{equation*}
	   Z_{x}(s)=Z_{x}(\tau^{-\sum s_{i}}(\cO_{X})).
	  \end{equation*}

	  Let us recall that near the large complex structure limit and
	  the Gepner point, we have quintic periods such that they have
	  finite radii of convergence and make period vectors
	  $\Pi_{B}(x)$ and $\Pi_{B}^{\infty}(x)$.

	  Let us prove Theorem \ref{thm:sheaf}.

	  \begin{proof}
	   Let us recall quintic periods in the form of \cite[Example
	   1]{Hos00} as follows:
	\begin{align*}
	 w^{(0)}(x)&=w(x,0),\\
	 w^{(1)}(x)&=\frac{1}{2\pi i}\frac{\partial}{\partial p}w(x,p)\mid_{p=0},\\
	 w^{(2)}(x)&=\frac{1}{2! (2 \pi i)^2} \ 5 \ \frac{\partial^2}{\partial
	  p^2}w(x,p)\mid_{p=0} +\frac{11}{2}\frac{1}{2\pi
	  i}\frac{\partial}{\partial p}w(x,p)\mid_{p=0},\\
	 w^{(3)}(x)&=-\frac{1}{3! (2\pi i)^3} \ 5 \
	  \frac{\partial^3}{\partial     p^3}w(x,p)\mid_{p=0}
	 -\frac{1}{2 \pi i} \ \frac{50}{12} \ \frac{\partial}{\partial
	 p}w(x,p)\mid_{p=0}.
	\end{align*}
	Then we have the period vector around the large complex structure
	limit $\Pi_{B}(x)={}^{t}(w^{(0)}(x), w^{(1)}(x), w^{(2)}(x),
	w^{(3)}(x))$.  For the monodromy around the Gepner point, on
	$\Pi_{B}(x)$, in the form of \cite[Example 4]{Hos00}, we have 
	the following monodromy matrix:
	\[ M_{\infty}= \left( \begin{array}{cccc}
			1& 0&0 &-1 \\
		       -1& 1&0 & 1 \\
		       -3&-5&1 & 3 \\ 5&-8&1 &-4 \end{array} \right).\]

	   Notice that $M_{\infty}$ corresponds to $\tau^{-1}$, since for
	   monodromy matrices $M_{0}$ and $M_{1}$ corresponding to
	   $T_{\cO_{X}}$ and $\otimes \cO_{1}$ in {\it loc cite}, we have
	   that $M_{1}M_{0}=M_{\infty}^{-1}$ corresponds to $\tau$
	   (cf. \cite[Section 4.2]{Hor}).

	We have $\Pi_{B}^{\infty}(x)[4]=Z_{x}(\cO_{X})$.

	For $0<x<<1$, we have
	\begin{equation*} \label{eq:mirror}
	 \Pi_{B}(x)=
	 \left(\begin{array}{c}
	  1+O(x)\\
		\frac{\log(x)}{2\pi i}+O(x)\\
		\frac{5}{2}\left(\frac{\log(x)}{2\pi
			    i}\right)^2+O(\log(x))\\
		-\frac{5}{6} \left(\frac{\log(x)}{2\pi i}\right)^3+O(\log(x))
	 \end{array}\right)
	 \footnote{Numbers $\frac{5}{2}$ and $-\frac{5}{6}$ are in Equation
	 5.5 in \cite{CdGP} and Equation 5.4 in \cite{KleThe} up to a 
	 constant. }.
       \end{equation*} 
	For $\Pi_{B}(x)':={}^{t}(0,0,m_{1},m_{0})= {}^{t}(0,0,
	-\frac{5}{8\pi^2}\log(x)^2, -\frac{5i}{48\pi^3} \log(x)^3)$, we
	have the following:
       \begin{align*}
	\Pi_{B}^{\infty}(x)'[4]&= m_{0},\\
	M_{\infty}\Pi_{B}^{\infty}(x)'[4]&= m_{1}-4 m_{0},\\
	M_{\infty}^2\Pi_{B}^{\infty}(x)'[4]&= -3 m_{1}+ 6 m_{0},\\
	M_{\infty}^3\Pi_{B}^{\infty}(x)'[4]&=3 m_{1}- 4 m_{0},\\
	M_{\infty}^4\Pi_{B}^{\infty}(x)'[4]&= - m_{1}+ m_{0}.
	\end{align*}

	   So comparing ratios of coefficients, for $0<x<<1$, we can take
	   the difference
	$a_{i}=\Arg(M_{\infty}^{i+1}\Pi_{B}(x)[4])-\Arg(M_{\infty}^{i}\Pi_{B}(x)[4])$
	   to be
       \begin{align*}
	\pi>a_{0}>a_{1}>a_{2}>a_{3}>0
	\end{align*}
	with each $a_{i}$ being close to  $\pi$.

	   Let us take simple representations $s$ of the heart $\mo
	   A_{3}^{\otimes 5}$ as stable objects and let angles of $Z_{x}(s)$
	   be their phases in the increasing way approximately by $\pi$ as
	   $\sum a_{i}$ increases.

	   It is clear from the standard theory of quiver
	   representations that these stable objects with their
	   self-direct sums taken as semistable objects give stability
	   conditions. It is clear that the second condition in
	   Definition \ref{def:stab} holds, since
	   $\Ext^{1}(s,s')\not\cong 0$ only when we have an arrow $s\to
	   s'$.
	  \end{proof}

	  Even if we weaken the second condition in Definition
	  \ref{def:stab} to be $\phi_{Q}-\phi_{Q'}<\pi$ for semistable
	  objects $Q,Q'\in \cA$ with $\Ext^{1}(Q,Q')\not\cong 0$ (and
	  weaken the second assumption in Remark \ref{rmk:spaces} with
	  the inequality
	  $\phi_{Q}-\phi_{Q'}+\epsilon_{\phi_{Q}}+\epsilon_{\phi_{Q'}}<\pi$),
	  a stability condition at the large complex structure limit
	  would not exist, unless we take dilation and rotation.  For
	  small $x$, $F_{i}=|M_{\infty}^{i}\Pi(x)[4]|$ go to infinitely
	  large at similar rates.  Let us mention that for central
	  charges $\Pi(x)[1]$, which are of a skyscraper sheaf of the
	  quintic, $\frac{\left|\Pi(x)[1]\right|}{F_{i}}$ goes to zero
	  as $x\to 0$.

	  Let us prove Theorem \ref{thm:lag}.

       \begin{proof}
	For $k=1,\cdots, 5$ and $j=0,1,\cdots, 4$, let us recall quintic
	periods in the form of \cite[Example 4]{Hos00} as follows:
	\begin{align*}
	 \tilde{\omega_{k}}(x)&=-\frac{1}{5}\frac{1}{(2\pi
	 i)^4}\sum_{N=0}^{\infty}
	\frac{\Gamma(N+\frac{k}{5})^{5}}{\Gamma(5N+k)} x^{-N-\frac{k}{5}}, \\
	 \omega_{j}^{\infty}(x)
	 &=\sum_{k=1}^{5}(1-\xi^k)^4\xi^{kj}\tilde{\omega_{k}}(x).
	\end{align*}
	Then $\Pi_{B}^{\infty}(x)={}^{t}(\omega_{0}^{\infty}(x),
	\omega_{1}^{\infty}(x), \omega_{2}^{\infty}(x),
	\omega_{4}^{\infty}(x))$ is a period vector around the Gepner point.

	In {\it loc cite}, we have the following connection matrix
	\[ N= \left( \begin{array}{cccc}
	       1&0&0&0 \\
		      -\frac{2}{5}&\frac{2}{5}&\frac{1}{5}&-\frac{1}{5} \\
		      -\frac{21}{5}&\frac{1}{5}&\frac{3}{5}&-\frac{8}{5} \\
		     1&-1 &0&0 \end{array} \right),\] such that
	$N\Pi_{B}^{\infty}(x)[4]=Z_{x}(\cO_{X})$ and the monodromy matrix of the
	period vector around the Gepner point as follows:
	\[ M= \left( \begin{array}{cccc}
	       0&1&0&0 \\
	       0&0&1&0 \\
	      -1&-1&-1&-1 \\
	       1&0 &0&0  \end{array} \right).\] 

	By changing the variable $x\mapsto x^{-5}$,
	$\omega_{j}^{\infty}(x)=\omega_{0}^{\infty}(\xi^j x)$.  Up to a
	common constant factor on each $\omega_{j}^{\infty}(x)$,
	first-order approximations of $\omega_{j}^{\infty}(x)$ for
	$0<x<<1$ are $\xi^{j} x$. For $\Pi_{B}^{\infty}(x)':={}^{t}(x,\xi
	x, \xi^2 x,\xi^4 x)$, we have
	\begin{align*}
	 NM^{i}\Pi_{B}^{\infty}(x)'[4]&= \xi^{i}(1-\xi) x.
	\end{align*}

	So for $0<x<<1$, we can take 
	\begin{align*}
	 \Arg(N\Pi_{B}^{\infty}(x)[4])&<
	 \Arg(NM\Pi_{B}^{\infty}(x)[4])<\\
	 \Arg(NM^2\Pi_{B}^{\infty}(x)[4])
	 &<\Arg(NM^3\Pi_{B}^{\infty}(x)[4])<\\
	 \Arg(NM^4\Pi_{B}^{\infty}(x)[4])
	 &< \Arg(N\Pi_{B}^{\infty}(x)[4])+2\pi
	\end{align*}
	with each difference close to $\frac{2\pi}{5}$.

	Let our stable objects be simple representations $s$ of the heart
	$\mo A_{3}^{\otimes 5}$ and their phases be angles of $Z_{x}(s)$
	in the increasing way approximately by $\frac{2\pi}{5}$ as $\sum
	a_{i}$ increases.
       \end{proof}

       A stability condition at the Gepner point would not exist, unless
       we take dilation and rotation.  Masses of central charges of
       simple representations $s$ go to zero as $x\to \infty$. The notion
       of stability conditions whose central charges have the pentagon
       symmetry has been discussed in \cite{Oka09,Tod}.

       For simple representations $s^{i}$ such that $\sum_{j}
       s^{i}_{j}=i$, we have the following figures of central charges
       near the large complex structure limit and the Gepner point by
       direct computations of quintic periods for $x=10^{10}$ and
       $x= 10^{-10}$.
       \begin{figure}[H]
	\minipage{0.48\textwidth}
	\def\svgwidth{160pt}
	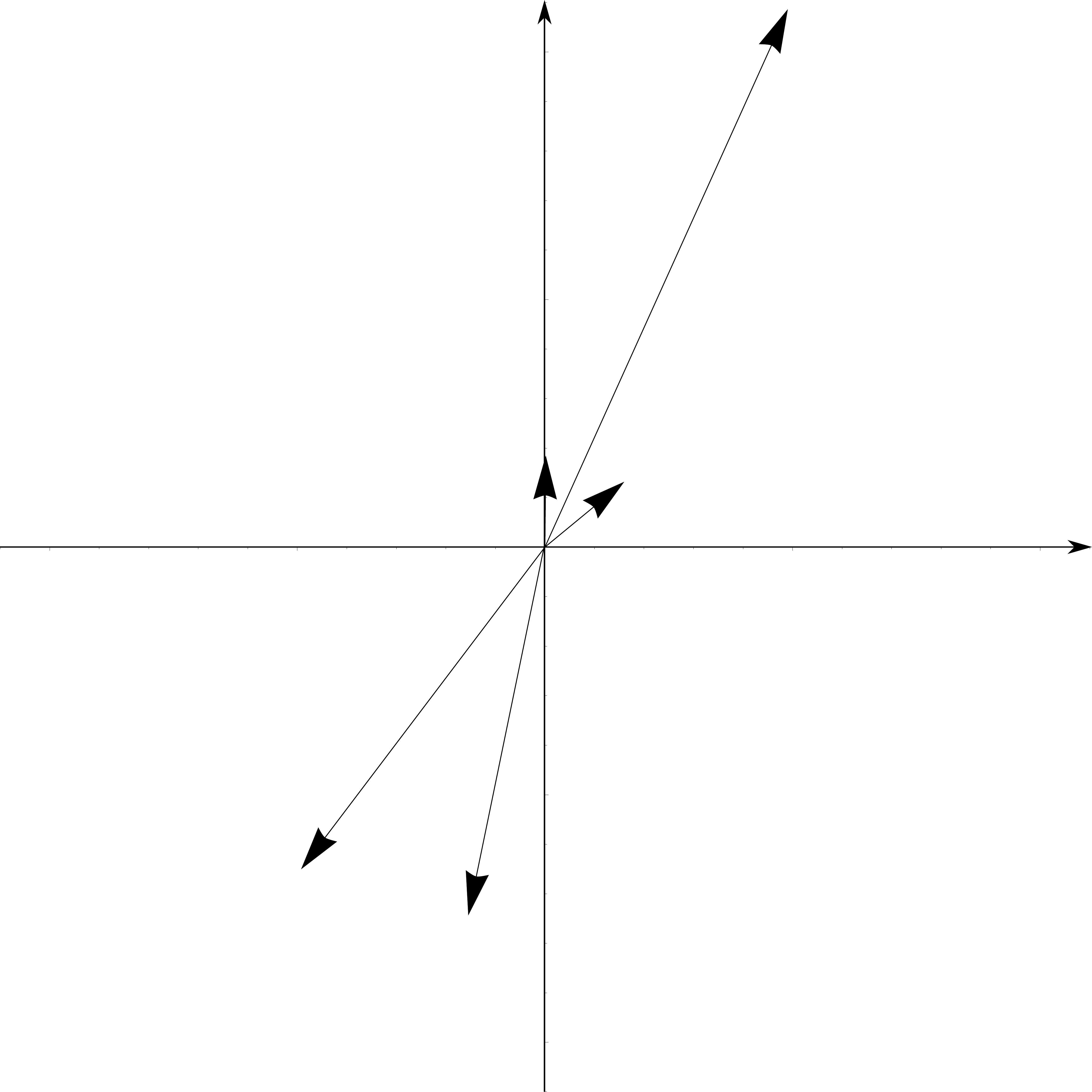
	\caption{Central charges near the large complex structure limit}\hfill
	\endminipage
	\minipage{0.48\textwidth}
	\def\svgwidth{150pt}
	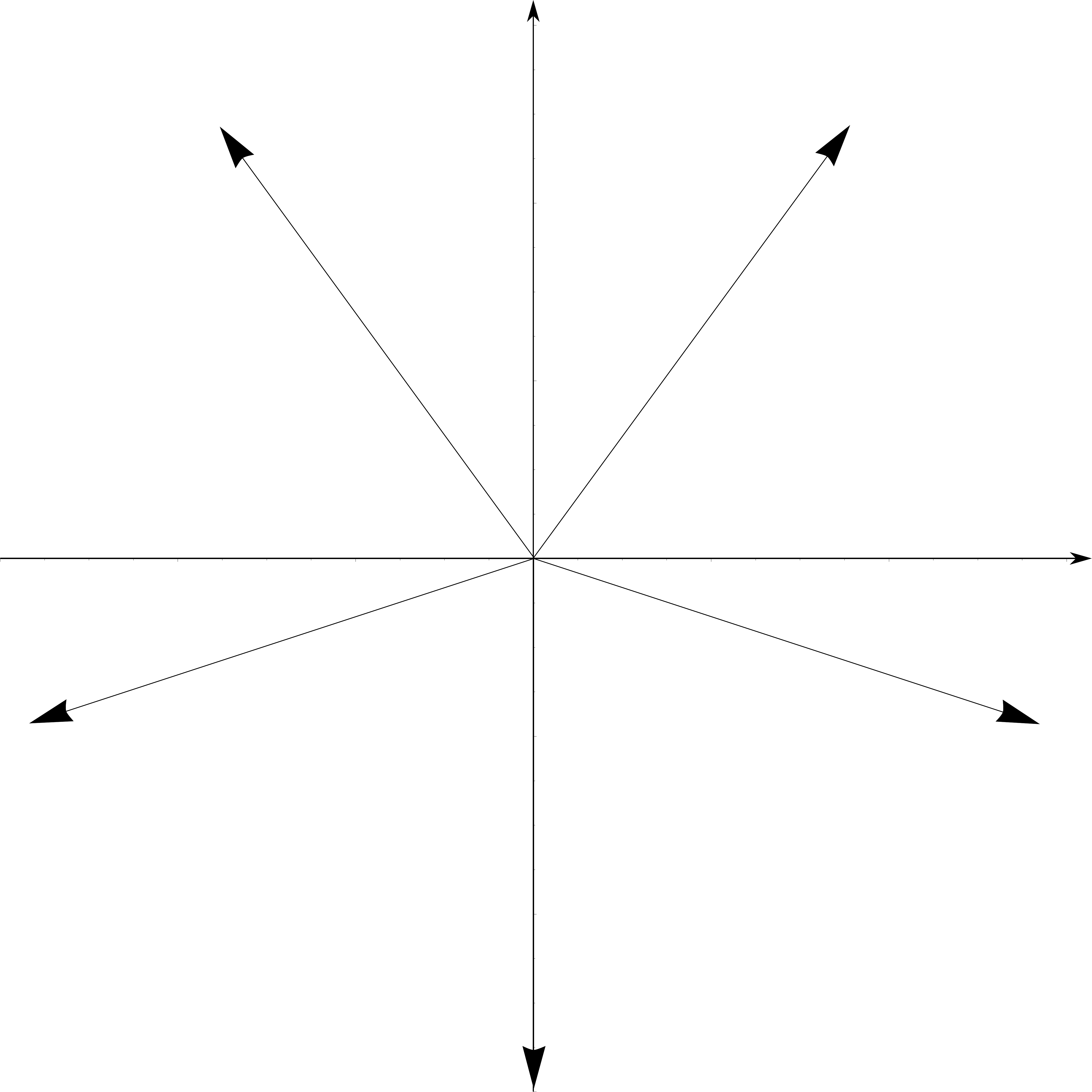
	\caption{Central charges near the Gepner point}
	\endminipage
       \end{figure}
       Let us prove Corollary \ref{cor:analytic}.

       \begin{proof} 
	From phases of central charges of stable objects in the proofs of
	Theorems \ref{thm:sheaf} and \ref{thm:lag}, we can deform central
	charges so that we have desired phases and masses of stable
	objects. By Lemma \ref{lem:deform}, we have wall-crossings of
	second kind, which are different from ones of dilation and
	rotation.
       \end{proof}

	 Let us prove Corollary \ref{cor:analytic_deform}.
	 \begin{proof}
	  From the proof of Theorem \ref{thm:sheaf}, by Lemma
	  \ref{lem:deform}, deforming quintic periods near the large
	  complex structure limit by letting $x\mapsto 0$ for $x>0$ give
	  wall-crossings of second kind, which are different from ones of
	  dilation and rotation.
	 \end{proof}

	 As mentioned in the introduction, near the large complex
	 structure limit, let us define central charges $Z'_{x}$ as
	 follows:
	\begin{align*}
	 Z'_{x}(s)=M^{-\sum s_{i}}\Pi_{B}(x)[4]+
	 \frac{\Pi_{B}(x)[2]}{5}.
	\end{align*}
	We have the following lemma in order.
	\begin{lemma}\label{lem:sheaf_modified}
	 For $\Db_{G}(\Coh X)\cong \Db(\mo A_{3}^{\otimes 5})\cong
	 \FS(F)$, the heart $\mo A_{3}^{\otimes 5}$, and central charges
	 $Z'_{x}$ near the large complex structure limit $x=0$, we have
	 stability conditions on the heart such that each stable object
	 is isomorphic to a Lagrangian vanishing cycle and an equivariant
	 coherent sheaf of the Beilinson basis with a shift. These
	 stability conditions are asymptotically the same as ones in
	 Theorem \ref{thm:sheaf}.
	\end{lemma}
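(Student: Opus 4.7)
The plan is to argue by perturbation from the proof of Theorem~\ref{thm:sheaf}. Recall from that proof the leading asymptotics, as $x\to 0^{+}$, that $\Pi_{B}(x)[4]\sim -\tfrac{5}{6}\bigl(\tfrac{\log(x)}{2\pi i}\bigr)^{3}$ and $\Pi_{B}(x)[2]\sim \tfrac{\log(x)}{2\pi i}$. Since $M$ is a constant integer matrix, each scalar $M^{-\sum s_{i}}\Pi_{B}(x)[4]$ is a fixed linear combination of the components of $\Pi_{B}(x)$, and for $\sum s_{i}\in\{0,1,2,3,4\}$ its leading term inherits the cubic-in-$\log(x)$ growth analyzed in the proof of Theorem~\ref{thm:sheaf}. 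Consequently, the correction $\tfrac{1}{5}\Pi_{B}(x)[2]=O(\log x)$ is of strictly lower order than the principal term $M^{-\sum s_{i}}\Pi_{B}(x)[4]=O(\log^{3} x)$.

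First I would verify, directly from the asymptotics above, that for $0<x\ll 1$ the perturbed values $Z'_{x}(s)$ lie in $\mathbf{H}$ and that the differences of their arguments agree, up to errors vanishing as $x\to 0$, with the numbers $a_{i}=\Arg(M^{i+1}\Pi_{B}(x)[4])-\Arg(M^{i}\Pi_{B}(x)[4])$ appearing in the proof of Theorem~\ref{thm:sheaf}. Hence the strict ordering $\pi>a_{0}>a_{1}>a_{2}>a_{3}>0$ persists after perturbation for $x$ sufficiently small, and the five simple representations $s$, indexed by $\sum s_{i}\in\{0,1,2,3,4\}$, receive strictly increasing phases $\phi_{s}$.

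Second, I would copy verbatim the closing paragraph of the proof of Theorem~\ref{thm:sheaf}: take the simple representations as stable objects with these phases, their self-direct sums as semistable objects, and observe that the second condition of Definition~\ref{def:stab} holds because $\Ext^{1}(s,s')\neq 0$ only when there is an arrow $s\to s'$ in $A_{3}^{\otimes 5}$, which by the strict ordering above forces $|\phi_{s}-\phi_{s'}|<\pi$. The Harder-Narasimhan property is automatic from the standard theory of quiver representations, and the identification of stable objects with equivariant sheaves of the Beilinson basis and with Lagrangian vanishing cycles is inherited from HMS as in Theorem~\ref{thm:sheaf}. The final assertion, that the perturbed stability conditions are asymptotically the same as those of Theorem~\ref{thm:sheaf}, is immediate from $Z'_{x}(s)/Z_{x}(s)\to 1$ as $x\to 0$.

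The only delicate point I foresee is ruling out an accidental cancellation where the subdominant summand $\tfrac{1}{5}\Pi_{B}(x)[2]$ is comparable to the leading coefficient of $M^{-\sum s_{i}}\Pi_{B}(x)[4]$ for some particular choice of $\sum s_{i}$; however, because that leading coefficient is a nonzero rational combination of $\bigl(\tfrac{\log(x)}{2\pi i}\bigr)^{3}$-terms determined by the integer matrix $M$ and the vector appearing in the proof of Theorem~\ref{thm:sheaf}, the ratio $|\Pi_{B}(x)[2]|/|M^{-\sum s_{i}}\Pi_{B}(x)[4]|$ tends to $0$ uniformly in $\sum s_{i}\in\{0,1,2,3,4\}$, which is exactly what is needed.
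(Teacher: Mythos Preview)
Your approach is the paper's own: its proof is the single sentence that adding $\tfrac{\Pi_{B}(x)[2]}{5}$ does not change the asymptotics of $\Pi_{B}(x)$ near the large complex structure limit, and you have simply unpacked this into the explicit order comparison $O(\log x)$ versus $O(\log^{3}x)$ and then rerun the closing paragraph of Theorem~\ref{thm:sheaf}.

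One correction, however: your assertion that for $0<x\ll 1$ the values $Z'_{x}(s)$ lie in $\mathbf{H}$ is false. From the proof of Theorem~\ref{thm:sheaf} the successive argument differences $a_{0},a_{1},a_{2},a_{3}$ are each close to $\pi$, so the five central charges sweep out a total arc of length close to $4\pi$; this is exactly why the paper needs the relaxed Definition~\ref{def:stab} rather than Bridgeland stability conditions (cf.\ the discussion opening Section~\ref{sec:stab}). The error is harmless to your argument, since nothing you write afterwards actually uses membership in $\mathbf{H}$: you correctly revert to the phase ordering $\pi>a_{0}>a_{1}>a_{2}>a_{3}>0$ and to the $\Ext^{1}$ check of Definition~\ref{def:stab}. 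Simply delete the $\mathbf{H}$ clause.
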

	\begin{proof}
	 From the proof of Theorem \ref{thm:sheaf}, it is clear that
	 adding $\frac{\Pi_{B}(x)[2]}{5}$ does not change asymptotics of
	 $\Pi_{B}(x)$ near the large complex structure limit.
	\end{proof}

	Let us prove Corollary \ref{cor:mirror_map}.
	\begin{proof}
	 Let $S(x)$ be the sum of distinct central charges of stable
	 objects.  Then $\sum_{i=0}^{4}M_{\infty}^{i}=0$.  By rotation
	 and dilation with the quintic period $w(x,0)=\Pi_{B}(x)[1]$, we
	 obtain the mirror map $t(x)=\frac{S(x)}{w(x,0)}$.
	\end{proof}

	\begin{remark}\label{rmk:numerical}
	 Before putting a remark on numerical predictions of
	 \cite{CdGP,BCOV,Giv,LLY,Zin} at the end of this paragraph, from
	 \cite[Appendix B]{Zin} let us recall the following.  In
	 \cite{CdGP}, for the variable $q=e^{2 \pi i t}$, Lambert
	 expansions of the normalized Yukawa coupling
	 $Y(q)=\frac{5}{(1-5^5 x)w(x,0)^2}\left( \frac{q}{x}
	 \frac{dx}{dq}\right)^{3}$ give the numerical prediction, which
	 is proved in \cite{Giv,LLY}. In \cite{BCOV}, by $\psi$ of
	 $x=(5\psi)^{-5}$, Lambert expansions of
	 $\partial_{t}F_{1}(\psi)$ of the generalized index
	 $F_{1}(\psi)=\ln\left(
	 \left(\frac{\psi}{w(5\psi^{-5},0)}\right)^{\frac{62}{3}}
	 \left(1-\psi^5\right)^{-\frac{1}{6}} \frac{d\psi}{dt}\right)$
	 give the numerical prediction, which is proved in \cite{Zin}.
	 Notice that, since we know asymptotics and linearity of quintic
	 periods, among quintic periods, $w(x,0)$ can be specified
	 geometrically by the space of stability condition of Lemma
	 \ref{lem:sheaf_modified}; in fact, for $S(x)$ in the proof of
	 Corollary \ref{cor:mirror_map}, $w(x,0)$ is the quintic period
	 such that the exponential function of the quotient of $2\pi i
	 S(x)$ by the quintic period is asymptotically the parameter $x$
	 of the space.  So, up to changes of variables of $x$, $t$, and
	 $w(x,0)$ by elementary functions, categorical and geometric
	 interpretations of factors of $Y(q)$ or $e^{F_{1}(\psi)}$ are
	 given by the space of stability conditions of Lemma
	 \ref{lem:sheaf_modified}, which are given by HMS.
	\end{remark}

	We have spaces of stability conditions in a general setting in
	Remark \ref{rmk:spaces}.  For stability conditions of
	$\Db_{G}(\Coh X)$, let us prove the following.

       \begin{corollary}\label{cor:PF}
	Near the Gepner point and the large complex structure limit, we
	have one-parameter subspaces of stability conditions of
	$\Db_{G}(\Coh X)$ such that any four of distinct central charges
	of stable objects give a basis of quintic periods.
       \end{corollary}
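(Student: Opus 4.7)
The plan is to reduce the corollary to two linear-algebra facts about the monodromy matrices $M$ and $M_{\infty}$ appearing in the proofs of Theorems~\ref{thm:sheaf} and~\ref{thm:lag}, and then to apply the standard matroid observation that five vectors in a four-dimensional $\bC$-vector space satisfying a single linear relation with all five coefficients nonzero have the property that every four-element subset is a basis.

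First I would fix the one-parameter families. For $x$ in a punctured neighborhood of $0$ (resp.\ $\infty$), Theorem~\ref{thm:sheaf} (resp.\ Theorem~\ref{thm:lag}) produces a one-parameter family of stability conditions on the heart $\mo A_{3}^{\otimes 5}$, which via HMS is a one-parameter subspace of stability conditions of $\Db_{G}(\Coh X)$. The stable objects are simple representations $s$, and the identity $Z_{x}(s)=Z_{x}(\tau^{-\sum s_{i}}(\cO_{X}))$ combined with $\tau^{5}\cong[2]$ (which acts trivially on central charges) shows that, up to equality, there are exactly five distinct central charges $Z_{x}(s^{0}),\dots,Z_{x}(s^{4})$, indexed by $\sum s_{j}\bmod 5$. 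In the notation of the two theorem proofs these are the $4$th components of $M_{\infty}^{i}\Pi_{B}(x)$ and $NM^{i}\Pi_{B}^{\infty}(x)$ respectively.

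The two facts to verify are then: (a) the relations $\sum_{i=0}^{4}M_{\infty}^{i}=0$ and $\sum_{i=0}^{4}M^{i}=0$, which give a single linear dependence $\sum_{i=0}^{4}Z_{x}(s^{i})=0$ with all coefficients equal to $1$; (b) the five coefficient vectors of $Z_{x}(s^{0}),\dots,Z_{x}(s^{4})$ in the basis $\Pi_{B}(x)$ (resp.\ $\Pi_{B}^{\infty}(x)$) span the full four-dimensional space of quintic periods. Fact (a) for $M_{\infty}$ is already used in the proof of Corollary~\ref{cor:mirror_map}; for $M$ it follows by the same mechanism, since $M^{5}=I$ on central charges (as $\tau^{5}\cong[2]$ is trivial there) and $1$ is not an eigenvalue on the cyclic span of the starting vector, forcing the minimal polynomial to divide $1+x+x^{2}+x^{3}+x^{4}$. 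Fact (b) reduces to showing that the appropriate starting vector is cyclic for the relevant monodromy, which is an explicit $4\times 4$ determinant computation using the sparse matrices written in the two theorem proofs.

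Granting (a) and (b), the matroid observation applies: the five central charges span a four-dimensional subspace of quintic periods and satisfy a single dependence with all nonzero coefficients, so every four-element subset is a basis. I expect the most delicate step to be the cyclicity check inside (b), since (a) is either already in the paper or an immediate analogue; once (b) is settled, the remaining argument is routine.
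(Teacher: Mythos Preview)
Your proposal is correct, and it supplies considerably more argument than the paper does. The paper's proof of this corollary is essentially one sentence: it observes that $\Pi_{B}(x)$ and $\Pi_{B}^{\infty}(x)$ are bases of solutions of the Picard--Fuchs equation and then asserts that the statement follows from the constructions in Theorems~\ref{thm:sheaf}, \ref{thm:lag} and Lemma~\ref{lem:sheaf_modified}. In particular the paper does not spell out why \emph{any} four of the five distinct central charges are linearly independent; your matroid observation (five vectors in a four-dimensional space with a single dependence having all nonzero coefficients) is exactly the missing step, and it is the right way to justify that claim.

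Two small remarks that would streamline your write-up. First, since $\Pi_{B}(x)=N\Pi_{B}^{\infty}(x)$ with $N$ invertible, the Gepner monodromy $M$ on $\Pi_{B}^{\infty}$ and $M_{\infty}$ on $\Pi_{B}$ are conjugate, $M_{\infty}=NMN^{-1}$; hence both the relation $\sum_{i=0}^{4}M^{i}=0$ and the cyclicity check near the Gepner point reduce to the corresponding facts for $M_{\infty}$, so only one computation is really needed. Second, for fact~(a) you can bypass the eigenvalue discussion entirely: the characteristic polynomial of the explicit $4\times 4$ matrix $M$ displayed in the proof of Theorem~\ref{thm:lag} is immediately seen to be $1+x+x^{2}+x^{3}+x^{4}$, so Cayley--Hamilton gives $\sum_{i=0}^{4}M^{i}=0$ directly. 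With these simplifications your outline becomes a clean proof; what the paper records is effectively the hypothesis of your argument rather than the argument itself.
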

       \begin{proof}
	Period vectors $\Pi_{B}^{\infty}(x)$ and $\Pi_{B}(x)$ give bases
	of solutions of the Picard-Fuchs equation.  So the assertion
	follows from constructions of stability conditions in Theorems
	\ref{thm:sheaf} and \ref{thm:lag} and Lemma
	\ref{lem:sheaf_modified}.
       \end{proof}

       To be able to deform stability conditions in a certain way, in
       general we have to deal with highly non-trivial problems to
       carefully look into distributions of central charges of stable
       objects.

       For the quiver $A_{3}^{\otimes 5}$ and central charges of quintic
       periods, we have not seen {\it wall-crossings of first kind}
       \cite{KonSoi08}, which is a disappearance of a stable object by a
       deformation of central charges. Compared to this case, we have a
       wall-crossing of first kind as explained in Remark
       \ref{rmk:a_1-wall}.

       \begin{remark}\label{rmk:deform}
	Stability conditions in Theorems \ref{thm:sheaf} and
	\ref{thm:lag} and in Lemma \ref{lem:sheaf_modified} deform into
	Bridgeland stability conditions only with wall-crossings of
	second kind, by deforming central charges of stable objects into
	$\bfH$ without changing their orders.
       \end{remark}

       \begin{remark}\label{rmk:Teichmuller}
	Since we can deform a stability condition into Bridgeland
	stability conditions as in Remark \ref{rmk:deform}, by known
	results on Bridgeland stability conditions, we expect that we
	can deform stability conditions of Theorem \ref{thm:sheaf} and
	\ref{thm:lag} on the heart $\mo A_{3}^{\otimes 5}$ into
	stability conditions of central charges of quintic periods on a
	heart $\cA \cong \mo A_{3}^{\otimes 5}$ such that the heart
	$\cA$ consists of the object corresponding to a non-source
	vertex $s$ of the heart $\mo A_{3}^{\otimes 5}$ as the object
	corresponding to the source vertex of the heart $\cA$.  This
	would lead to further understanding of the {\it Teichm\"uller
	theory} discussed by Aspinwall-Douglas \cite{AspDou}, since by
	forgetting the equivariance, we can obtain the object
	corresponding to the vertex $s$ by applying $\tau^{-i}$ for some
	$i>0$ on the object corresponding to the source vertex of the
	heart $\mo A_{3}^{\otimes 5}$.  Theorems \ref{thm:sheaf} and
	\ref{thm:lag} and the {\it mirror conjecture} in \cite{Kon95}
	are local in nature.  Let us mention that by Proposition
	\ref{prop:exceptional}, we can tilt hearts with stability
	conditions without first passing to Bridgeland stability
	conditions.
       \end{remark}

   \section{A quasimodular form on quintic periods}\label{sec:broken_symmetry}  

   Let us prove Theorem \ref{thm:quasimodular}.  We recall that for a
   stability condition of $\Db_{G}(\Coh X)$, we have defined an object
   of $\Db(\Coh X)$ to be stable if its equivariant object in
   $\Db_{G}(\Coh X)$ is stable.

   Geometrically, in the following, we obtain a quasimodular form
   essentially by counting a single point, which is the moduli space of
   a spherical object, with multiplicities in a 3-Calabi-Yau category.

    \begin{proof}
     We obtain a stable spherical object in $\Db(\Coh X)$, since each
     object $\tau^{i}(\cO_{X})$ is spherical. For the $\Ext$ algebra of
     direct sums of the stable spherical object and the motivic
     parameter $q$, we have the quantum dilogarithm \cite{KonSoi08,Kel}:
     \begin{align*}
      \bfE(q^{\frac{1}{2}},z)=\sum_{m\geq
      0}\frac{(-q^{\frac{1}{2}})^{m^2}}{(q^m-1)\cdots(q^m-q^{m-1})}z^{m}
     \end{align*}
     (with the change of the variable $q^{\frac{1}{2}}$ in {\it loc cite}
     into $-q^{\frac{1}{2}}$).

     As in \cite{MelOka}, by taking the formal logarithm of the quantum
     dilogarithm with respect to the variable $z$, we have
     \begin{align*}
      \log_{z}
      \bfE(q^{\frac{1}{2}},z)=\sum_{m>0}\frac{1}{m}\frac{q^{\frac{m}{2}}}{1-q^{m}}z^{m}
      &=\sum_{m,r>0}\frac{1}{m}(q^{\frac{mr}{2}}-q^{mr})z^{m}.
     \end{align*}
     Let $q=\exp(2\pi i \tau)$ and $z=\exp(2\pi i \tau')$. By
     differentiating $\log_{z}\bfE(q^{\frac{1}{2}},z)$ twice with respect
     to the variable $\tau'$, we obtain
      \begin{align*}
       \left.\frac{\partial^{2}}{\partial \tau'^{2}}
       \log_{z} \bfE(q^{\frac{1}{2}},z)\right|_{z=1}
       =G_{2}\left(\frac{\tau}{2}\right)-G_{2}(\tau),
      \end{align*}
     which is a quasimodular form \cite{KanZag}.
    \end{proof}

     By the proof of Theorem \ref{thm:quasimodular}, the quasimodularity
     in Theorem \ref{thm:quasimodular} is a non-trivial property of the
     quantum dilogarithm.  As in \cite{KonSoi08,Kel,Qiu}, the well-known
     quantum pentagon identity of the quantum dilogarithm explains the
     wall-crossing of first kind of $\mo A_{1}$ in Section
     \ref{sec:a1_quiver} and so of full extension-closed subcategories
     of $\mo A_{3}^{\otimes 5}$ isomorphic to $\mo A_{1}$.

     Let us mention that we can recover the quantum dilogarithm from the
     generating function $\frac{\partial^{2}}{\partial
     \tau'^{2}}\log_{z}\bfE(q^{\frac{1}{2}},z)$ by taking integrations
     with respect to the variable $\tau'$ with appropriate boundary
     values and the formal exponential function with respect to the
     variable $z$.

     The variable $\tau$ is conjectured to be $\Omega$-background of
     field theories of $\bR^{4}$ (a toric parameter of Nekrasov's
     partition functions a.k.a. graviphoton background)
     \cite{DimGuk,Kan}.

     Notice that we do not have $2$ in front of $G_{2}(\tau)$ in Theorem
     \ref{thm:quasimodular} so that we have a modular form
     $G_{2}(\frac{\tau}{2})-2 G_{2}(\tau)$.  However, for a modular form
     on quintic periods, let $J(\tau)=G_{2}(\frac{\tau}{2})-G_{2}(\tau)$
     for the quasimodular form in Theorem \ref{thm:quasimodular}. Then,
     measuring the failure for $J(\tau)$ to be modular by taking
     $K(\tau)=\frac{1}{\tau^{2}}J(-\frac{1}{\tau})-J(\tau)$, we have
     $K(\tau+2)-K(\tau)\simeq 0$ for $|\tau|>>0$ and
     $\frac{1}{(i\tau)^{2}}K\left(-\frac{1}{\tau}\right)-K(\tau)= 0$.
     This is because we have
     $\frac{1}{\tau^{2}}G_{2}(-\frac{1}{\tau})=G_{2}(\tau)- \frac{\pi
     i}{\tau}$ and $G_{2}(\tau+1)=G_{2}(\tau)$ \cite{Zag08}.

     The quasimodular form $J(\tau)$ is essentially obtained in the
     discussion of \cite[Section 3.2]{Oka09} for a spherical object. The
     point of Theorem \ref{thm:quasimodular} is to attach a quasimodular
     form to quintic periods, using the monodromy around the Gepner
     point.

      \section{The GKZ system of the $A_{1}$ singularity}\label{sec:a1}
      Let us discuss the GKZ system of $A_{1}$ singularity
      $\bC^{2}/\bZ_{2}$.  For $a_{i}\in \bC$,
      $x=\frac{a_{1}a_{3}}{a_{2}^2}$, polynomials $a_{1}+a_2 W+a_{3}
      W^2$, and its roots $\beta_{0}(x)$ and $\beta_{1}(x)$, let us put
      the following:
      \begin{align*}
       \varpi_{0}(x)&=1, \\
       \varpi_{1}(x)&=\frac{\log\beta_{0}(x)-\log \beta_{1}(x)}{2\pi i}.
       \end{align*}
       
       By \cite[Proposition 4.4]{Hos04}, $\varpi_{0}(x)$ and
       $\varpi_{1}(x)$ give a basis of solutions of the GKZ system of
       the $A_{1}$ singularity, and near the large complex structure
       limit $x=0$, $\varpi_{1}(x)$ can be written as a hypergeometric
       series.  By \cite[Section 3]{Hos04}, $\varpi_{0}(x)$ and
       $\varpi_{1}(x)$ give a basis of central charges of the GKZ system
       of the $A_{1}$ singularity.  For simplicity, we say $A_{1}$
       periods for solutions of the GKZ system of the $A_{1}$
       singularity.

       The GKZ system of the $A_{1}$ singularity and its solutions are
       closely related to K. Saito's differential equations and
       primitive forms of the $A_{1}$ singularity \cite[Propositions 4.1
       and 4.2]{Hos04}.
       \subsection{Doubled Kronecker-quiver case}\label{sec:dK}
       For the cotangent bundle of $\bP^{1}$, denoted by $T^{*}\bP^{1}$,
       let $\Db_{\bP^{1}}(\Coh T^{*}\bP^1)$ be the full subcategory of
       $\Db(\Coh T^{*}\bP^1)$ consisting of objects supported over
       $\bP^{1}\stackrel{i}{\hookrightarrow} T^{*}\bP^1$.  Let us
       mention that $T^{*}\bP^{1}$ can be obtained by the minimal
       resolution of the $A_{1}$ singularity.

       Let us recall the doubled Kronecker quiver $K$, which has two
       vertices, two parallel arrows $a,b$, and two more inverse arrows
       $a',b'$ with commuting relations $b'a=a'b$ and $ba'=ab'$.  We
       have the following figure:
       \begin{figure}[H]
	\begin{center}
	 \def\svgwidth{90pt}
	 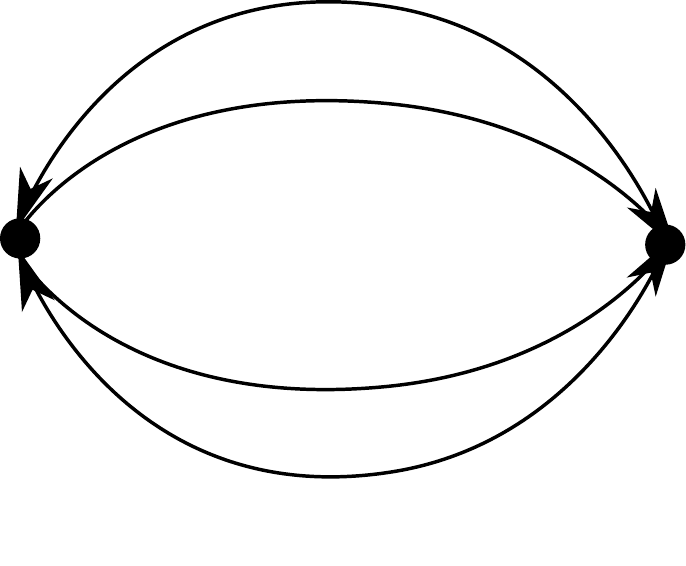
	 \caption{Doubled Kronecker quiver (with the commuting relations)}
	\end{center}
       \end{figure}

       Let $\mo^{nil} K$ be the category of nilpotent representations of
       the doubled Kronecker quiver, which is a heart of
       $\Db_{\bP^{1}}(\Coh T^{*}\bP^1)$ \cite{CraHol}.

       We have Bridgeland stability conditions on $\mo^{nil}K$ for any
       non-zero central charges of simple representations, which are
       isomorphic to objects $i_{*}\cO_{\bP^{1}}(-1)$ and
       $i_{*}\cO_{\bP^{1}}(-2)[1]$.

        For a point $y\in \bP^{1}$,
	let central charges of $A_{1}$ periods, denoted by $Z_{x}^{A_{1}}$, 
	be defined as follows \cite[Section 3]{Hos04}:
       \begin{align*}
	Z_{x}^{A_{1}}(i_{*}\cO_{y})=\varpi_{0}(x),\\
	Z_{x}^{A_{1}}(i_{*}\cO_{\bP^{1}}(-1))=\varpi_{1}(x).
       \end{align*}
       Near the large complex structure limit, $\varpi_{1}(x)$ is
       asymptotically $\frac{\log(x)}{2\pi i}$.

       \begin{remark}\label{rmk:A1}
	For $A_{1}$ periods, we can just take Bridgeland stability
	conditions, since for objects of the heart $\mo^{nil} K$, we do
	not have $A_{1}$ periods which are asymptotically some several
	powers of $\frac{\log(x)}{2\pi i}$.
       \end{remark}
       
	As an analog of Corollary \ref{cor:analytic_deform}, we have the
	following proposition.

	\begin{proposition}\label{prop:analytic_dk}
	 For Bridgeland stability conditions on the heart $\mo^{nil} K$
	 of $\Db_{\bP^1}(T^{*}\bP^1)$ of central charges $Z_{x}^{A_{1}}$
	 of $A_{1}$ periods, $A_{1}$ periods give wall-crossings of the
	 second kind, which are different from ones of dilation and
	 rotation.
	\end{proposition}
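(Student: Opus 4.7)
The plan is to mirror the proof of Corollary \ref{cor:analytic_deform}. I vary $x$ along a small interval in $\bR_{>0}$ near the large complex structure limit $x=0$, consider the resulting family $\sigma_x$ of Bridgeland stability conditions on $\mo^{nil} K$ with central charges $Z_x^{A_1}$, and apply Lemma \ref{lem:deform} together with a direct check ruling out dilation and rotation.

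For the first step, I express the central charges of the simples $S_1\cong i_*\cO_{\bP^1}(-1)$ and $S_2\cong i_*\cO_{\bP^1}(-2)[1]$ in terms of the $A_1$ periods. From $Z_x^{A_1}(i_*\cO_y)=\varpi_0(x)=1$, $Z_x^{A_1}(S_1)=\varpi_1(x)$, and the K-theoretic relation $[i_*\cO_y]=[S_1]+[S_2]$, the central charge on $S_2$ is determined. On a suitable arc in $\bR_{>0}$ near $0$, both simple central charges lie in $\bfH$ thanks to the asymptotic $\varpi_1(x)\sim \log(x)/(2\pi i)$, so by the paragraph preceding Definition \ref{def:stab}, $\sigma_x$ is indeed a Bridgeland stability condition. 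For $x, x'$ close on this arc, the collection of semistables in $\mo^{nil} K$ and their phase ordering are preserved, as the relative phases of the two simples vary continuously and no crossing occurs.

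To apply Lemma \ref{lem:deform}, it suffices to exhibit a semistable object whose phase differs between $\sigma_x$ and $\sigma_{x'}$. The phase of $S_1$ is proportional to $\Arg\varpi_1(x)$, which depends non-trivially on $x$ by the logarithmic asymptotic, while the phase of $i_*\cO_y$ is constant; so there exists an intermediate $\phi$ with $\phi^{\sigma_x}_{S_1}<\phi<\phi^{\sigma_{x'}}_{S_1}$, yielding a non-trivial wall-crossing of second kind. To rule out dilation and rotation, if $\sigma_{x'}=\lambda\sigma_x$ for some $\lambda\in\bC^*$, then $\varpi_0(x')=\lambda\varpi_0(x)$ forces $\lambda=1$ since $\varpi_0\equiv 1$, and then $\varpi_1(x)=\varpi_1(x')$ contradicts the logarithmic asymptotic.

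The main obstacle is the bookkeeping needed to verify that the full collection of semistable objects of $\mo^{nil} K$ (not only the simples) and their phase orderings really are preserved along the chosen arc, in particular that no pair of semistable phases crosses. This reduces to an explicit computation using the asymptotics of $\varpi_1(x)$ together with the classification of semistable representations of the doubled Kronecker quiver with the commuting relations; I expect it to be routine because only the phase of $S_1$ moves while that of $i_*\cO_y$ stays pinned.
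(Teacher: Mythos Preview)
Your proposal is correct and follows essentially the same approach as the paper: both arguments hinge on the observation that $Z_x^{A_1}(i_*\cO_y)=\varpi_0(x)=1$ is fixed, which simultaneously forces $\lambda=1$ in any dilation/rotation and, together with the non-constancy of $\varpi_1(x)$, triggers Lemma~\ref{lem:deform}. The paper's proof is simply terser---it states only that the central charge of $i_*\cO_y$ is already fixed and invokes Lemma~\ref{lem:deform}---so your extra bookkeeping about the full semistable list, while not wrong to mention, is not treated as an obstacle there.
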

	\begin{proof}
	 The central charge of the object $i_{*}\cO_{y}$ for a point $y$
	 is already fixed.  So, the assertion follows by Lemma
	 \ref{lem:deform}.
	\end{proof}

       	By HMS \cite{IUU} (in particular the proof of \cite[Theorem
       	28]{IUU}), in an affine manifold in the formulation of Fukaya
       	categories of \cite{IUU}, simple representations correspond to
       	Lagrangian spheres.
	
	The mirror map is defined as $e^{2\pi i \varpi_{1}(x)}$.  As an
	analog of Corollary \ref{cor:mirror_map}, we have the following
	proposition.

       \begin{proposition}\label{prop:a1_mirror_map}
	For Bridgeland stability conditions on the heart $\mo^{nil} K$
	of $\Db_{\bP^1}(T^{*}\bP^1)$ of central charges $Z_{x}^{A_{1}}$
	of $A_{1}$ periods, by dilation and rotation with the $A_{1}$
	period $\varpi_{1}(x)$, the sum of central charges of Lagrangian
	simple objects gives the mirror map.
       \end{proposition}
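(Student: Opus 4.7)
The plan is to transcribe the argument of Corollary~\ref{cor:mirror_map} to this rank-two setting, where the cancellation among central charges is especially transparent.

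First I would identify the Lagrangian simple objects. By the HMS of \cite{IUU} just recalled, the two simple nilpotent representations of $K$ correspond to Lagrangian spheres, and under the identification with $\Db_{\bP^{1}}(\Coh T^{*}\bP^{1})$ they match, up to shift, the pair $i_{*}\cO_{\bP^{1}}(-1)$ and $i_{*}\cO_{\bP^{1}}(-2)[1]$. The first has central charge $\varpi_{1}(x)$ by the definition of $Z_{x}^{A_{1}}$ recalled above. For the second, I would apply linearity of $Z_{x}^{A_{1}}$ on $K_{0}$ to the pushforward of the standard sequence
\[ 0 \to i_{*}\cO_{\bP^{1}}(-2) \to i_{*}\cO_{\bP^{1}}(-1) \to i_{*}\cO_{y} \to 0, \]
obtaining $Z_{x}^{A_{1}}(i_{*}\cO_{\bP^{1}}(-2)[1]) = \varpi_{0}(x) - \varpi_{1}(x)$.

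The sum of the two central charges therefore telescopes to $\varpi_{0}(x) = 1$. This cancellation is the rank-two analog of the identity $\sum_{i=0}^{4} M_{\infty}^{i} = 0$ used in the proof of Corollary~\ref{cor:mirror_map}: the logarithmic $\varpi_{1}$-contributions cancel and only the constant period $\varpi_{0}$ survives. Finally, applying the $\bC^{*}$ dilation-and-rotation action on central charges with the scalar $e^{2\pi i \varpi_{1}(x)}$ built from the $A_{1}$ period $\varpi_{1}(x)$ (in parallel with the division by $w(x,0)$ in Corollary~\ref{cor:mirror_map}) sends this sum to the mirror map $e^{2\pi i \varpi_{1}(x)}$ defined just above the proposition.

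The main obstacle I foresee is checking that both simples of $K$ are genuinely realized as Lagrangian spheres in the geometric model of \cite{IUU}, so that both qualify as ``Lagrangian simple objects'', and pinning down the precise normalisation of the dilation/rotation action implicit in the phrase ``with the $A_{1}$ period $\varpi_{1}(x)$''; the former should follow from the proof of \cite[Theorem 28]{IUU} already referenced, and the latter is fixed by consistency with the stated mirror map definition and the analogy with Corollary~\ref{cor:mirror_map}.
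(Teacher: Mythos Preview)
Your argument is correct and matches the paper's: both compute that the two simple central charges $\varpi_{1}(x)$ and $1-\varpi_{1}(x)$ sum to $Z_{x}^{A_{1}}(i_{*}\cO_{y})=\varpi_{0}(x)=1$, and then rescale by the $A_{1}$ period to reach the mirror map. The only normalisation point you flagged is resolved in the paper by taking the dilation/rotation to be multiplication by $\varpi_{1}(x)$ itself (giving $\varpi_{1}(x)$) and then applying the exponential $e^{2\pi i(\,\cdot\,)}$, rather than multiplying directly by $e^{2\pi i\varpi_{1}(x)}$; either reading yields the stated mirror map.
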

       \begin{proof}
	We have $\varpi_{1}(x)Z_{x}^{A_{1}}(i_{*}\cO_{y})= \varpi_{1}(x)$
	and take the exponential function.
       \end{proof}
       
	As an analog of Corollary \ref{cor:PF}, we have the following
	proposition.

	\begin{proposition}\label{prop:a1_PF}
	 For Bridgeland stability conditions of
	 $\Db_{\bP^1}(T^{*}\bP^1)$, we have an one-parameter subspace
	 such that central charges of Lagrangian simple objects of the
	 heart $\mo^{nil} K$ give a basis of $A_{1}$ periods.
	\end{proposition}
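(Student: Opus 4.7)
The plan is to produce the required one-parameter subspace of Bridgeland stability conditions by letting the GKZ variable $x$ itself be the parameter, exploiting the existence statement recalled just before Remark~\ref{rmk:A1}: any non-zero assignment of central charges on the two simples of $\mo^{nil}K$ extends to a Bridgeland stability condition on the heart. The task then reduces to expressing the central charges of the Lagrangian simples as explicit functions of $x$ and verifying they form a basis of $A_{1}$ periods.

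First, by the HMS of \cite{IUU} discussed just after Proposition~\ref{prop:analytic_dk}, I identify the Lagrangian simples of $\mo^{nil}K$ with $i_{*}\cO_{\bP^{1}}(-1)$ and $i_{*}\cO_{\bP^{1}}(-2)[1]$. Evaluating $Z_{x}^{A_{1}}$, the first is $\varpi_{1}(x)$ directly from the definition in Section~\ref{sec:dK}. For the second I would use the $K$-theoretic identity $[i_{*}\cO_{y}]=[i_{*}\cO_{\bP^{1}}(-1)]+[i_{*}\cO_{\bP^{1}}(-2)[1]]$ coming from the standard resolution of a point on $\bP^{1}$, together with additivity of central charges, to obtain $\varpi_{0}(x)-\varpi_{1}(x)$.

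Next, on the complement of the discrete vanishing loci of $\varpi_{1}(x)$ and of $\varpi_{0}(x)-\varpi_{1}(x)$, both values are non-zero, and the existence statement above then produces a Bridgeland stability condition $\sigma_{x}$ for each such $x$. Since the two central charges vary non-trivially in the single complex parameter $x$, the map $x\mapsto\sigma_{x}$ carves out a one-parameter subspace of the stability manifold. The pair $\{\varpi_{1}(x),\,\varpi_{0}(x)-\varpi_{1}(x)\}$ is an invertible linear change of the basis $\{\varpi_{0}(x),\varpi_{1}(x)\}$ of solutions of the GKZ system and is therefore itself a basis of $A_{1}$ periods.

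The step that will require the most care is ensuring that $\sigma_{x}$ is genuinely a Bridgeland stability condition on the given heart $\mo^{nil}K$ rather than on some tilt of it: besides staying away from the two discrete vanishing loci, one has to apply the gauge-freedom dilation and rotation of Section~\ref{sec:thms}, where needed, to bring both central charges into $\bfH$. Beyond this mild nuisance there is no substantive obstacle, as the argument is a direct $A_{1}$-analog of Corollary~\ref{cor:PF} in which the universality of Bridgeland stability conditions on $\mo^{nil}K$ replaces the delicate asymptotic analysis near the large complex structure limit and the Gepner point that was needed for the quintic.
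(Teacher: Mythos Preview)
Your proposal is correct and follows essentially the same approach as the paper: the paper's proof simply records that the central charges of the two simple representations of $\mo^{nil}K$ are $\varpi_{1}(x)$ and $1-\varpi_{1}(x)$, which is exactly your $\varpi_{0}(x)-\varpi_{1}(x)$ since $\varpi_{0}(x)=1$. You supply more of the surrounding justification (the $K$-theoretic identity for $[i_{*}\cO_{y}]$, the invertible change-of-basis argument, and the dilation/rotation needed to land in $\bfH$), but the core computation and the overall strategy coincide with the paper's.
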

	\begin{proof}
	 For central charges of $Z_{x}^{A_{1}}$ of $A_{1}$ periods, we
	 have $\varpi_{1}(x)$ and $1-\varpi_{1}(x)$ for simple
	 representations of $\mo^{nil} K$.
	\end{proof}

	We have autoequivalences on $\Db_{\bP^{1}}(\Coh T^{*}\bP^1)$
	\cite{IshUeh} which give monodromy actions of the GKZ system
	\cite{dFS}.  In particular, we have the automorphism of the
	doubled Kronecker quiver, which we denote by $M$.  Up to
	monodromy actions, $A_{1}$ periods are given by central charges
	of $A_{1}$ periods.

	\begin{remark}\label{rmk:a_1-wall}
	 Unlike for the quiver $A_{3}^{\otimes 5}$ and central charges
	 of quintic periods, the automorphism $M$ for the doubled
	 Kronecker quiver and central charges of $A_{1}$ periods is
	 non-trivial.  In particular, as the disappearance of the stable
	 object $i_{*}\cO_{y}$ or $M(i_{*}\cO_{y})$ for a point $y\in
	 \bP^1$, we have the wall-crossing of first kind.
	\end{remark}

    \subsection{$A_{1}$-quiver case}\label{sec:a1_quiver}

     For the function $W^{3}:\bC\to \bC$, denoted by $H$, we have the
     derived Fukaya-Seidel category $\FS(H)$, which is given by a
     morsification of $H$ such as $a_{1}+a_{2}W +a_{3} W^{2}+W^3- V^2$
     for $a_{i}\in \bC$.  A heart of $\FS(H)$ is isomorphic to $\mo
     A_{1}$.

     As in Section \ref{sec:HMS}, we have a morsification of $H$ by
     A'Campo, which is of elliptic curves.  For such a morsification of
     $H$, which we keep taking in the following, simple representations
     of $\mo A_{1}$ correspond to Lagrangian vanishing cycles in the
     zero locus of the morsification in the formulation of Fukaya-Seidel
     categories.

     On $\mo A_{1}$, we have Bridgeland stability conditions for any
     non-zero central charges of simple representations as in Section
     \ref{sec:dK}. Central charges of $A_{1}$ periods can be regarded as
     central charges of $\mo A_{1}$, by embedding the quiver $A_{1}$ into
     the doubled Kronecker quiver $K$.  For Bridgeland stability
     conditions on $\mo A_{1}$, let us call simple representations as
     Lagrangian stable objects.

     Let us recall that from \cite[Example 2.9]{Orl09}, for the ring
     $A=\frac{\bC[x]}{x^{3}}$, we have the graded category of the
     singularity $D_{Sg}^{gr}(A)$, which is an algebro-geometric derived
     category, such that we have
     \begin{align*}
      D_{Sg}^{gr}(A)\cong \Db(\mo A_{1})\cong \FS(H).
     \end{align*}

     For analogs of Corollaries \ref{cor:analytic_deform},
     \ref{cor:mirror_map}, and \ref{cor:PF}, in Propositions
     \ref{prop:analytic_dk}, \ref{prop:a1_mirror_map}, and
     \ref{prop:a1_PF}, we simply replace $\mo^{nil}K$ and
     $\Db_{\bP^1}(\Coh T^{*}\bP^1)$ with $\mo A_{1}$ and
     $D_{Sg}^{gr}(A)$.  Let $t$ be the non-simple irreducible
     representation of $\mo A_{1}$.  For their proofs, in $K(\mo^{nil}
     K)$, the class of the embedded representation of the representation
     $t$ is that of the object $i_{*}\cO_{y}$ for a point $y\in
     \bP^{1}$. So, we simply replace the object $i_{*}\cO_{y}$ by the
     representation $t$ in the proofs of Propositions
     \ref{prop:analytic_dk}, \ref{prop:a1_mirror_map}, and
     \ref{prop:a1_PF}.

     We have analogs of Theorems \ref{thm:sheaf} and \ref{thm:lag} for
     the quiver $A_{1}$ by taking central charges of $A_{1}$ periods for
     an embedding of the $A_{1}$ quiver into the doubled Kronecker
     quiver $K$.  Also, as in Section \ref{sec:dK}, we have the
     wall-crossing of first kind by $A_{1}$ periods.

     \section{Acknowledgments}
     The author thanks Professors S. Galkin, K. Gomi, D. Honda,
     S. Hosono, M. Kashiwara, M. Kontsevich, H. Nakajima, R. Ohkawa,
     K. Ueda, and S. Yanagida for their helpful discussions.

 \bibliographystyle{amsalpha}
 
 \end{document}